\newtheorem{thm}{Theorem}[section]
\newtheorem{lemma}[thm]{Lemma}
\newtheorem{cor}[thm]{Corollary}
\theoremstyle{definition}
\newtheorem{defn}[thm]{Definition}
\theoremstyle{remark}
\numberwithin{equation}{section}
\newcommand*\wrapletters[1]{\wr@pletters#1\@nil}
\def\wr@pletters#1#2\@nil{#1\allowbreak\if&#2&\else\wr@pletters#2\@nil\fi}
\def\alp{{\alpha}} 
\def\bet{{\beta}}  
\def\gam{{\gamma}} \def\Gam{{\Gamma}}
 \def\Del{{\Delta}}
\def\lam{{\lambda}}
\def\sig{{\sigma}}
\def\eps{\varepsilon}
\def\le{\leqslant} \def\ge{\geqslant}
\def\d{{\,{\rm d}}}
\def \sig{{\sigma}}
\def \bJ {\mathbb J}
\def \bN {\mathbb N}
\def \bQ {\mathbb Q}
\def \bR {\mathbb R}
\def \bZ {\mathbb Z}
\def \ba {\mathbf a}
\def \bu {\mathbf u}
\def \bv {\mathbf v}
\def \bx {\mathbf x}
\def \by {\mathbf y}
\def \bmu {{\boldsymbol{\mu}}}
\def \bbeta {\boldsymbol{\beta}}
\def \fm {\mathfrak m}
\def \fn {\mathfrak n}
\def \ft {\mathfrak t}
\def \fF {\mathfrak F}
\def \fJ {\mathfrak J}
\def \fM {\mathfrak M}
\def \fN {\mathfrak N}
\def \fU {\mathfrak U}
\def \cI {\mathcal I}
\def \cJ {\mathcal J}
\def \cL {\mathcal L}
\def \cR {\mathcal R}
\def \cU {\mathcal U}
\def \cV {\mathcal V}
\def \sinc {\mathrm{sinc}}
\def \meas {\mathrm{meas}}
\begin{document}
\title[Waring's problem with shifts]{Waring's problem with shifts}
\author[Sam Chow]{Sam Chow}
\address{School of Mathematics, University of Bristol, University Walk, Clifton, Bristol BS8 1TW, United Kingdom}
\email{Sam.Chow@bristol.ac.uk}
\subjclass[2010]{11D75, 11E76, 11P05}
\keywords{Diophantine inequalities, forms in many variables, inhomogeneous polynomials}
\thanks{}
\date{}
\begin{abstract} 
Let $\mu_1, \ldots, \mu_s$ be real numbers, with $\mu_1$ irrational. We investigate sums of shifted $k$th powers $\mathfrak{F}(x_1, \ldots, x_s) = (x_1 - \mu_1)^k + \ldots + (x_s - \mu_s)^k$. For $k \ge 4$, we bound the number of variables needed to ensure that if $\eta$ is real and $\tau > 0$ is sufficiently large then there exist integers $x_1 > \mu_1, \ldots, x_s > \mu_s$ such that $|\mathfrak{F}(\bx) - \tau| < \eta$. This is a real analogue to Waring's problem. When $s \ge 2k^2-2k+3$, we provide an asymptotic formula. We prove similar results for sums of general univariate degree $k$ polynomials.
\end{abstract}
\maketitle

\section{Introduction}
\label{intro}

Since its formulation \cite{War1770} in 1770, Waring's problem has been the benchmark for research on diophantine equations in many variables. Presently, inequalities of the shape
\[ |\lam_1 x_1^k + \ldots + \lam_s x_s^k| < \eta \]
enjoy a similar status in the world of diophantine inequalities. We proffer an alternate analogue to Waring's problem. Let $s$ and $k$ be positive integers, and let $\mu_1, \ldots, \mu_s$ be real numbers, with $\mu_1$ irrational. We investigate the values taken by sums of shifted $k$th powers
\[
\fF(x_1, \ldots, x_s) = (x_1 - \mu_1)^k + \ldots + (x_s - \mu_s)^k
\]
for integers $x_i > \mu_i$ ($1 \le i \le s$).

\begin{defn} \label{s1def}
For integers $k \ge 2$, let $s_1(k)$ be the least integer such that the following holds whenever $s \ge s_1(k)$. Let $\mu_1, \ldots, \mu_s$ be real numbers, with $\mu_1$ irrational. Let $\eta > 0$ be real number, and let $\tau$ be a sufficiently large positive real number. Then there exist integers $x_1 > \mu_1, \ldots, x_s>\mu_s$ such that
\begin{equation} \label{main}
|\fF(\bx) - \tau| < \eta.
\end{equation}
\end{defn}

\begin{thm} \label{s1bounds}  For $4 \le k \le 12$ we have $s_1(k) \le C_1(k)$, where $C_1(k)$ is given in the table below.
\FloatBarrier
\begin{table}[h!]
\caption{Upper bounds for $s_1(k)$.}
\label{table1}
\begin{center}
\begin{tabular*}{\textwidth}{@{} |r| @{\extracolsep{\fill} } ccccccccc|}
\hline
$k$    &$4$&$5$&$6$&$7$&$8$&$9$&$10$ & $11$ & $12$			 \\
\hline
$C_1(k)$ &$16$&$27$&$38$&$51$&$70$&$87$&$104$&$120$&$135$ \\
\hline
\end{tabular*}
\end{center}
\end{table}
\FloatBarrier
Further, if $k \ge 4$ then
\begin{equation} \label{largebound1}
s_1(k) < 4k \log k + (2 + 2 \log 2)k - 3. 
\end{equation}
\end{thm}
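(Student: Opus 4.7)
My plan is to apply a Davenport--Heilbronn argument. Set $N = \lfloor (\tau/s)^{1/k} \rfloor$ and define the exponential sums
\[
f_i(\alpha) = \sum_{\mu_i < y \le N + \mu_i} e\bigl(\alpha(y-\mu_i)^k\bigr) \qquad (1 \le i \le s).
\]
Choose a Fej\'er-type kernel $K(\alpha)$ so that
\[
\cN(\tau) = \int_{\bR} K(\alpha)\, e(-\alpha\tau) \prod_{i=1}^s f_i(\alpha)\, d\alpha
\]
is a lower bound for the number of integer solutions of \eqref{main}; it then suffices to show $\cN(\tau) \to \infty$. I would decompose $\bR = \fM \cup \fm \cup \ft$ into major arcs $\fM = [-T,T]$ with $T$ a small negative power of $N$, trivial arcs $\ft = \{|\alpha| > X\}$ with $X$ a small positive power of $N$, and minor arcs $\fm = [-X,X] \setminus \fM$.

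On $\fM$, I would approximate each $f_i(\alpha)$ by the archimedean integral $v(\alpha) = \int_0^N e(\alpha u^k)\, du$ — the shifts $\mu_i$ contribute only lower-order terms via Euler--Maclaurin — producing a main term of order $\eta\tau^{s/k-1}$ from the singular integral $\int K(\alpha) e(-\alpha\tau) v(\alpha)^s\, d\alpha$. On $\ft$, the decay $K(\alpha) \ll \alpha^{-2}$ combined with $|f_i(\alpha)| \le N$ yields a contribution $O(N^s/X)$, negligible for $X$ a sufficiently large power of $N$.

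The crux is the minor-arc estimate, where the irrationality of $\mu_1$ is essential. I expect to establish the uniform pointwise bound $\sup_{\alpha \in \fm} |f_1(\alpha)| = o(N)$. Expanding the phase, $\alpha(y-\mu_1)^k = \alpha y^k - k\mu_1 \alpha y^{k-1} + \ldots$; a Weyl-type inequality for such inhomogeneous polynomial phases shows that if $|f_1(\alpha)|$ is comparable to $N$ then \emph{all} coefficients of the expanded polynomial admit simultaneous rational approximations. In particular, if $\alpha \approx a/q$ with $q$ a small power of $N$, then $k\mu_1 a/q$ must also be close to a rational with small denominator; the irrationality of $\mu_1$ forces this to fail away from $\alpha = 0$, so such $\alpha$ is pushed back into $\fM$. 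This is the mechanism pioneered by Freeman and Wooley for diophantine inequalities with irrational coefficients, and I would adapt it to the inhomogeneous setting. Combining this pointwise bound with a Waring-type moment estimate $\int_{\fm} |f(\alpha)|^{2t}\, d\alpha \ll N^{2t-k}$ — for $2t$ a critical exponent supplied by Wooley's efficient congruencing — delivers a minor-arc contribution beating the main term once $s$ exceeds the stated thresholds.

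The main obstacle is the optimisation step: one must balance H\"older exponents between the pointwise bound on $f_1$ and the available moment estimates so as to minimise $s$. For $4 \le k \le 12$, inserting the state-of-the-art numerical values of the critical mean-value exponents from the Vaughan--Wooley and Wooley papers should produce the table entries. For large $k$, the bound \eqref{largebound1} comes from combining Wooley's near-optimal estimate $2t = 2k\lceil\log k\rceil + O(k)$ with two applications of the moment bound in the H\"older splitting; the factor $4k\log k$ — rather than the $2k\log k$ that suffices for Waring's problem proper — reflects precisely this doubling intrinsic to diophantine inequalities.
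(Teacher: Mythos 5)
Your skeleton is the right one---a Davenport--Heilbronn kernel integral, major/minor/trivial arc dissection, Freeman-style minor-arc decay from the irrationality of $\mu_1$, and a mean-value estimate from Wooley's efficient congruencing---but there are two concrete gaps that prevent this plan from reaching the stated numbers.

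First, and this is the central omission: the numbers in Table~\ref{table1} are completely out of reach if the mean value is taken over variables in the full range $(P,2P]$. The estimate you invoke, $\int |f(\alpha)|^{2t}\,d\alpha \ll N^{2t-k+\eps}$, requires $t \ge k^2-k+1$ (Lemma~\ref{EClem}), so the H\"older optimisation you describe bottoms out at roughly $s \ge 2(k^2-k+1)+1 = 2k^2-2k+3$, i.e.\ the threshold for the asymptotic formula in Theorem~\ref{asymptotic}. For $k=4$ that is $27$, nowhere near the table entry $16$. The paper's mechanism for doing better is \emph{diminishing ranges}: $2t$ of the variables are restricted to boxes $(P^{\lambda_j}, 2P^{\lambda_j}]$ with $\lambda_j = \lambda^{j-1}$, $\lambda = 1-(1-\sigma(k-1))/k$. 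Lemma~\ref{moment} then obtains essentially square-root cancellation $T \ll P^{\Delta+\eps}$ on the associated $2t$-th moment, where $\Delta = \sum_j\lambda_j < k$, at a cost far below $2(k^2-k+1)$. The remaining $E$ variables run over the full range and are handled by Lemma~\ref{ClassicalMajorIngredient} and Corollary~\ref{ClassicalMajorGeneral}; it is the trade-off between $t$ and $E$ in Theorem~\ref{mainThm} that yields the table. Likewise \eqref{largebound1} comes from Theorem~\ref{basicThm} with the cruder $\lambda = 1-1/k$ ranges, not from plugging Wooley's critical exponent into a full-range $L^{2t}$ bound. Without the diminishing-range construction, your plan reproves Theorem~\ref{asymptotic}'s bound, not Theorem~\ref{s1bounds}.

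Second, your trivial-arc treatment does not close. Bounding $|f_i(\alpha)| \le N$ and integrating $K(\alpha) \ll \alpha^{-2}$ gives $O(N^s/X)$, which is $o(N^{s-k})$ only if $X \gg N^k$; but the Freeman bound on $\fm$ holds only up to $|\alpha| \le T(P)$ where $T(P) \to \infty$ at an \emph{uncontrolled} rate, so you cannot push $X$ out to $N^k$. The correct treatment (as in \eqref{TrivialMajor3HP} and \eqref{basicTrivialMajor}) is to reuse the moment bound: the mean-value estimate holds uniformly over unit intervals, and summing $(T(P)+n)^{-2}$ over unit intervals beyond $T(P)$ delivers the required $o(P^{s-k})$ even though $T(P)$ grows arbitrarily slowly.

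Finally, a smaller point: the pointwise bound you expect is better phrased as $\sup_{\fm} |g_1(\alpha)g_2(\alpha)| = o(P^2)$ for a \emph{pair} of generating functions, since Freeman's lemma is formulated for products, and it is the product that enters the H\"older splitting in the paper.
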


A simplification of our methods shows that $s_1(2) \le 5$, and the author showed in \cite{Cho2014b} that $s_1(3) \le 9$. Given more variables, we may obtain an asymptotic formula counting solutions to \eqref{main}. For positive real numbers $\tau$ and $\eta$, denote by $N(\tau) = N_{s,k}(\tau; \eta,\bmu)$ the number of integral solutions $\bx \in (\mu_1, \infty) \times \ldots \times (\mu_s, \infty)$ to \eqref{main}.

\begin{thm} \label{asymptotic}
Let $\eta > 0$. Let $k \ge 4$ and $s \ge 2k^2-2k+3$. Then 
\[ N(\tau) \sim 2 \eta \Gam(1+1/k)^s \Gam(s/k)^{-1} \tau^{s/k-1}. \]
\end{thm}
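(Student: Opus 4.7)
I would apply the Davenport--Heilbronn method adapted to shifted exponential sums. Let $P = \tau^{1/k}$, and for each $i$ pick $\nu_i \in (0,1]$ so that the integers $x_i > \mu_i$ are parametrised by $x_i - \mu_i = y_i + \nu_i$ with $y_i \in \bZ_{\ge 0}$. Put
\[
f_i(\alp) = \sum_{0 \le y \le P} e(\alp (y+\nu_i)^k), \qquad F(\alp) = \prod_{i=1}^s f_i(\alp).
\]
A Fejer-style sandwich $K_- \le \mathbf{1}_{[-\eta,\eta]} \le K_+$ with negligible $L^1$ gap reduces the problem to asymptotically evaluating
\[
I(\tau) = \int_{-\infty}^{\infty} F(\alp) K(\alp) e(-\alp \tau) \d \alp
\]
for $K \in \{K_\pm\}$. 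I would then dissect $\bR = \fM \cup \fm \cup \ft$ with major arc $\fM = [-P^{-k+\delta}, P^{-k+\delta}]$, trivial range $\ft = \{|\alp| > P^A\}$, and minor arcs $\fm$ in between.

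On $\fM$, Euler summation yields $f_i(\alp) = v(\alp) + O(1 + |\alp| P^k)$ where $v(\alp) = \int_0^P e(\alp t^k) \d t$; the fractional shift $\nu_i$ merely moves the endpoints by $O(1)$ and is absorbed into the error. Replacing $F$ by $v^s$ on $\fM$ and extending the integral to all of $\bR$ at admissible cost yields, by Fourier inversion, the volume of $\{\bu \in [0,P]^s : |u_1^k + \cdots + u_s^k - \tau| < \eta\}$. The substitution $w_i = u_i^k$ converts this into a Dirichlet-type integral which, via $\Gam(1/k) = k\Gam(1+1/k)$, evaluates to the target main term $2\eta \Gam(1+1/k)^s \Gam(s/k)^{-1} \tau^{s/k-1}$. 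On $\ft$, the decay $K(\alp) \ll \alp^{-2}$ combined with $|F(\alp)| \le P^s$ renders the contribution $o(\tau^{s/k-1})$ once $A$ is large enough.

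The crux is the minor-arc estimate on $\fm$. Binomial expansion of $(y+\nu_i)^k$ displays $f_i(\alp)$ as a classical Weyl sum in $y$ with leading coefficient $\alp$ and lower-order coefficients depending on $\nu_i$, so Weyl's inequality supplies $\sup_{\alp \in \fm} |f_i(\alp)| \ll P^{1-\sig+\eps}$ with $\sig = 2^{1-k}$; the Dirichlet denominator of $\alp$ lies in the Weyl range by our choice of $\fM$ and $\ft$, with the irrationality of $\mu_1$ (hence of $\nu_1$) handling any residual rational-arc contribution from $f_1$. Combining this pointwise bound on $s - 2u$ variables with a mean value estimate
\[
\int_{\bR} |f_i(\alp)|^{2u} K(\alp) \d \alp \ll P^{2u - k + \eps}
\]
on the remaining $2u$ factors---available for $k \ge 4$ via the Bourgain--Demeter--Guth decoupling inequality or Wooley's efficient congruencing---bounds the $\fm$-contribution by $P^{s - k - \sig(s - 2u) + \eps}$. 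The hypothesis $s \ge 2k^2 - 2k + 3$ is exactly what is needed to accommodate a mean value threshold of $2u = 2k^2-2k+2$ while leaving at least one surplus Weyl factor, whose $P^{-\sig}$ saving converts the estimate into $o(\tau^{s/k-1})$. Calibrating this balance between Weyl's inequality and the best available mean value bound, and verifying that no additional singular-series machinery intrudes despite the single-major-arc framework, is the principal technical task.
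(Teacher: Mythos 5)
Your overall architecture—Davenport--Heilbronn with a sandwich kernel, a three-way dissection $\fM \cup \fm \cup \ft$, Euler summation and a $\Gam$-integral on $\fM$, kernel decay on $\ft$, and a mean value from Vinogradov (at threshold $2u = 2k^2-2k+2$, giving one surplus factor at $s \ge 2k^2-2k+3$)—is the paper's structure. The serious gap is in your treatment of $\fm$.

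You assert $\sup_{\alp \in \fm}|f_i(\alp)| \ll P^{1-\sig+\eps}$ ``by Weyl's inequality''. This is false: the Davenport--Heilbronn minor arcs $P^{\xi-k} < |\alp| \le T(P)$ contain points with small Dirichlet denominator (for instance small rationals $a/q$ once $T(P) > 1$), and at such $\alp$ the Weyl bound degrades to $|f_i(\alp)| \ll P^{1+\eps} q^{-2^{1-k}}$, which is nowhere near $P^{1-\sig+\eps}$. You acknowledge a ``residual rational-arc contribution from $f_1$'' to be handled by irrationality of $\mu_1$, but that handwave conceals the actual mechanism, and moreover irrationality does not rescue the pointwise bound: it gives no power saving at all. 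What the paper does is split $\fm$ by a \emph{second}, classical dissection into $\fn$ (where $|g_3| \le P^{1-\sig(k)+\eps}$, and there your argument applies verbatim) and $\fN = \fm \setminus \fn$. On $\fm \cap \fN$ one needs Freeman's lemma (Lemma~\ref{Freeman}): exploiting $\mu_1 \notin \bQ$, it produces a function $T(P) \to \infty$, growing as slowly as necessary, with $\sup_{\alp \in \fm}|g_1(\alp)g_2(\alp)| \ll P^2 T(P)^{-1}$. Since this saving is only $T(P)^{-1}$ (essentially a log), it cannot absorb any $P^{\eps}$ losses; that is why the paper also needs the $\eps$-\emph{free} classical-major-arc mean value estimate (Corollary~\ref{ClassicalMajorGeneral}), derived from Lemma~\ref{ClassicalMajorIngredient}, to control $\int_{\fU}|g_3|^{s-2}\,\d\alp \ll P^{s-2-k}$ without an $\eps$ in the exponent. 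Your accounting, which predicts a power saving $P^{-\sig(s-2u)}$ on all of $\fm$, therefore over-claims and does not reflect what is actually available; as written the proposal would not close on $\fm \cap \fN$ without the Freeman lemma plus $\eps$-free Hua combination.
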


By a simplification of our methods, we may obtain a similar asymptotic formula for sums of five shifted squares, and the author showed in \cite{Cho2014b} that eleven variables suffice when $k=3$. Theorem \ref{asymptotic} implies that $s_1(k) \le 2k^2-2k+3$. We can achieve better bounds, even in a more general setting, at the cost of not having an asymptotic formula for $N(\tau)$. We introduce some definitions in order to state our results precisely. 

\begin{defn} \label{Indef}
Let $h_1,\ldots,h_s$ be degree $k$ polynomials with real coefficients. We say that
\[ H(\bx) = \sum_{i\le s} h_i(x_i) \]
is \emph{indefinite} if $k$ is odd, or if the leading coefficients of $h_1,\ldots,h_s$ do not all have the same sign.
\end{defn}

\begin{defn} \label{irr} Let $k \ge 2$. For $i=1,2,\ldots,s$, let $h_i(x)$ be a degree $k$ polynomial with real coefficients given by
\[ h_i(x) = \beta_{ik}x^k+ \ldots + \beta_{i1}x + \beta_{i0}. \]
The polynomials $h_1, \ldots, h_s$ satisfy the \emph{irrationality condition} if there exist $i_1, i_2 \in \{1,2,\ldots,s \}$ and $j_1,j_2 \in \{1,2,\ldots,k\}$ such that $\beta_{i_2j_2} \ne 0$ and $\beta_{i_1j_1} / \beta_{i_2j_2}$ is irrational.
\end{defn}

\begin{defn} \label{s0def}
For integers $k \ge 2$, let $s_0(k)$ be the least integer such that the following holds whenever $s \ge s_0(k)$. Let $h_1,\ldots,h_s \in \bR[y]$ be degree $k$ polynomials satisfying the irrationality condition, and put $H(\bx) = \sum_{i \le s} h_i(x_i)$. Let $\eta> 0$ and $\tau$ be real numbers, and assume that $H(\bx)$ is indefinite. Then there exists $\bx \in \bZ^s$ such that
\begin{equation} \label{main2}
|H(\bx) - \tau| < \eta. 
\end{equation}
\end{defn}

In \cite{Cho2014b}, the author showed that $s_0(3) \le 9$. Meanwhile, a result of Margulis and Mohammadi \cite[Theorem 1.4]{MM2011} implies that $s_0(2) \le 3$.  Freeman \cite[Theorem 1]{Fre2003} studied $s_0(k)$ as $k \to \infty$, demonstrating that $s_0(k)$ is dominated by a function that is asymptotic to $4k \log k$. Here we provide an exact bound. We can also achieve better upper bounds for $s_0(k)$ when a specific value of $k$ is given. 

\begin{thm} \label{s0bounds}
For $4 \le k \le 12$ we have $s_0(k) \le C_0(k)$, where $C_0(k)$ is given in the table below.
\FloatBarrier
\begin{table}[h!]
\caption{Upper bounds for $s_0(k)$.}
\label{table2}
\begin{center}
\begin{tabular*}{\textwidth}{@{} |r| @{\extracolsep{\fill} } ccccccccc|}
\hline
$k$   &$4$&$5$&$6$&$7$&$8$&$9$&$10$ & $11$ & $12$			 \\
\hline
$C_0(k)$ & $18$ & $29$ & $43$ & $59$ & $79$ & $99$ & $115$ & $132$ & $149$ \\
\hline
\end{tabular*}
\end{center}
\end{table}
\FloatBarrier
Further, if $k \ge 4$ then
\begin{equation} \label{largebound0}
s_0(k) < 4k \log k + (2 + 2 \log 2)k - 3. 
\end{equation}
\end{thm}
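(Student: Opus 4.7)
The plan is to apply the Davenport--Heilbronn method. Write $P \asymp \tau^{1/k}$, and for each $i$ set $f_i(\alpha) = \sum_{x \in I_i} e(\alpha h_i(x))$, where $I_i \subset \bZ$ is an interval of length $\asymp P$ chosen so that the admissible ranges of the $h_i$ are compatible with the indefiniteness of $H$. With $K(\alpha) = \bigl(\sin(\pi\eta\alpha)/(\pi\alpha)\bigr)^2$ serving as a Fej\'er-type kernel, a non-negatively weighted count of integer solutions to $|H(\bx)-\tau|<\eta$ is
\[ N^*(\tau) = \int_{-\infty}^{\infty} \prod_{i=1}^s f_i(\alpha) \, K(\alpha) \, e(-\tau\alpha) \, d\alpha, \]
and it suffices to prove $N^*(\tau) > 0$ for all sufficiently large $\tau$. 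I would partition $\bR$ into major arcs $\mathfrak{M} = \{|\alpha| \le P^{-k+\delta}\}$, minor arcs $\mathfrak{m} = \{P^{-k+\delta} < |\alpha| \le T\}$, and trivial arcs $\mathfrak{t} = \{|\alpha|>T\}$, where $T = T(\tau)$ is a parameter that grows slowly with $\tau$.

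On the major arcs a routine pruning argument yields $\int_{\mathfrak{M}} \sim c_{\bbeta} \, \eta \, \tau^{s/k-1}$ with an explicit positive constant $c_{\bbeta}$ depending on the leading coefficients and on the sign choices dictated by indefiniteness; this provides the main term. On the trivial arcs, the bound $|f_i(\alpha)| \ll P$ together with $K(\alpha) \ll \alpha^{-2}$ delivers a negligible contribution once $T$ is chosen appropriately. The crux, and the main obstacle, is the minor arc estimate, where Freeman's \cite{Fre2003} refinement of Davenport--Heilbronn enters. The irrationality condition implies that for any $\alpha \in \mathfrak{m}$, the numbers $\alpha \beta_{i_1 j_1}$ and $\alpha \beta_{i_2 j_2}$ cannot simultaneously admit rational approximations of small denominator, for otherwise we would extract a rational approximation to the irrational $\beta_{i_1 j_1}/\beta_{i_2 j_2}$ of arbitrarily good quality. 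Hence for at least one index $i \in \{i_1, i_2\}$ the argument $\alpha$ lies outside the classical major-arc regime for $h_i$, and a Weyl- or van der Corput-type bound yields a quantitative saving of the form $\sup_{\alpha \in \mathfrak{m}} \min_i |f_i(\alpha)| \le \psi(\tau) P$ for some sub-growth function $\psi(\tau) \to 0$.

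Combining this $L^\infty$-saving with H\"older's inequality and an $L^{2t}$ mean-value estimate of the shape $\int_0^1 |f_i(\alpha)|^{2t}\,d\alpha \ll P^{2t-k+\eps}$ yields
\[ \int_\mathfrak{m} \prod_{i=1}^s f_i(\alpha) K(\alpha) e(-\tau\alpha) \, d\alpha \ll \psi(\tau)^{\kappa} \tau^{s/k-1} \]
for some $\kappa>0$, provided $s$ exceeds a threshold dictated by the admissible exponent $t$. For $4 \le k \le 12$ one inputs the sharpest available mean-value bounds---Vaughan's smooth Weyl sum estimates for the smaller $k$, and Wooley's efficient-congruencing / decoupling bounds for the larger values---and optimises in $t$ to produce the tabulated $C_0(k)$. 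For general $k \ge 4$, plugging in Wooley's near-optimal bound for the Vinogradov mean value at $t \sim k\log k$ and tracking the constants carefully yields $s_0(k) < 4k\log k + (2+2\log 2)k - 3$. The principal difficulty is this minor arc step: converting a purely qualitative irrationality hypothesis into a quantitative uniform decay bound on $\mathfrak{m}$ via $\psi(\tau)$ forces an inefficient interplay between $\psi$, the exponent $t$, and the width of $\mathfrak{M}$, and any slackness there translates directly into an inflated threshold for $s_0(k)$.
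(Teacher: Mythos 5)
Your outline captures the correct global architecture (Davenport--Heilbronn dissection, Freeman's $L^\infty$ saving on the minor arcs $\fm$, kernel decay on the trivial arcs), but the central mean-value input is wrong, and this is where the tabulated values actually come from. You propose an $L^{2t}$ bound $\int_0^1 |f_i(\alpha)|^{2t}\,d\alpha \ll P^{2t-k+\eps}$ with each $f_i$ a full-length Weyl sum over an interval of length $\asymp P$. For general real (in particular irrational) polynomials, the only such estimate available is via Vinogradov's mean value theorem (this is Lemma \ref{EClem} in the paper, derived from Wooley's efficient congruencing), which requires $t \ge k^2-k+1$ and therefore only yields $s_0(k) \le 2k^2-2k+3$ --- nowhere near $C_0(4)=18$, $C_0(5)=29$, etc. Your suggestion to input ``Vaughan's smooth Weyl sum estimates'' cannot be made to work here: smooth-number technology rests on $p$-adic divisibility and is unavailable once the polynomials are irrational, as the paper notes explicitly. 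So as written there is no route from your proposal to the numbers in the table.

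What the paper actually does (in Theorem \ref{basicThm} and Section \ref{basic}) is to restrict $2t$ of the variables to geometrically diminishing ranges $(P^{\lambda_j}, cP^{\lambda_j}]$ with $\lambda = 1-1/k$, paired up so that the resulting $2t$-fold mean value (Lemma \ref{lazy}) is essentially diagonal, $\ll P^{\lambda_1+\cdots+\lambda_t}$; this replaces the expensive Vinogradov input entirely. The remaining $E$ variables stay in the full range $(P,cP]$ and are controlled by a pointwise Weyl bound saving $P^{\sigma(k)}$ per variable, which forces a further subdivision of $\fm$ into a classical minor piece $\fn$ (where the pointwise bound applies) and a classical major piece $\fm \cap \fN$, the latter handled by the $\eps$-free Hua-type estimate of Corollary \ref{ClassicalMajorGeneral} combined with Freeman's $L^\infty$ decay. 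Your proposal omits this two-layer dissection, and this is not cosmetic: without it one cannot combine the pointwise and mean-value savings. Finally, Theorem \ref{s0bounds} is then read off from Theorem \ref{basicThm} by optimising over $t$ with $E = 1 + \max(2k-2, \lfloor k(1-1/k)^t\sigma(k)^{-1}\rfloor)$, and the asymptotic bound $4k\log k + (2+2\log 2)k - 3$ comes from the specific choice $t = \lceil 2k\log k + k\log 2\rceil$ together with $(1-1/k)^k \le 1/e$; your sketch gestures at ``optimising in $t$'' but has no such formula because the underlying mean value is different.
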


Our overall strategy is to use the Davenport-Heilbronn method, in the style of Freeman \cite{Fre2003}. A classical major and minor arc dissection is also needed, the point being that either a Weyl sum is small or its coefficients have good simultaneous rational approximations. To deduce Theorems \ref{s1bounds} and \ref{s0bounds}, we restrict some of the variables to lie in `diminishing ranges', an idea that goes back to Hardy and Littlewood \cite{HL1925}. For the remaining variables, we use known analogues to Weyl's inequality (see Lemma \ref{ClassicalMajorIngredient}). These save a power of $\sigma(k)$ per variable on classical minor arcs where, for $d \in \bN$, we define
\begin{equation} \label{sigmaDef} \sigma(d)^{-1} = \begin{dcases}
2^{d-1}, & d \le 8 \\
4(d^2 - 3d + 3), & d \ge 9.
\end{dcases}
\end{equation}
Note that $\sigma(d)$ is a decreasing function. We will ultimately deduce the following bounds, which are responsible for many of our results.

\begin{thm} \label{basicThm}
Let $k \ge 4$ and $t$ be positive integers, and put
\[ E = 1 +  \max(2k-2, \lfloor k(1-1/k)^t \sigma(k)^{-1}  \rfloor ) . \]
Then
\begin{equation} \label{basicBounds}
s_\iota(k) \le 2t + E \qquad (\iota = 0,1).
\end{equation}
\end{thm}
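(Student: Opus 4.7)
The plan is to apply the Davenport-Heilbronn method in Freeman's style, using diminishing ranges to cut down the number of variables needed for the minor-arc estimate. Fix $P \asymp \tau^{1/k}$ and write $s = 2t + E$. Distinguish $E$ \emph{principal} variables $z_1,\ldots,z_E$ ranging over an interval of length $\asymp P$, and $t$ \emph{diminishing-range} pairs $(y_j, y_j')_{j=1}^t$, where the $j$th pair ranges over an interval of length $P_j \asymp P^{(1-1/k)^j}$. To each variable attach the associated Weyl sum $S_i(\alp) = \sum e(\alp h_i(z_i))$ or $T_j(\alp) = \sum e(\alp h_{E+j}(y_j))$ (and similarly $T_j'$), and let $K_\eta$ be a Davenport-Heilbronn Fejér-type kernel with $\int_\bR K_\eta \asymp \eta$ and $|K_\eta(\alp)| \ll \min(\eta, \eta^{-1}\alp^{-2})$. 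The weighted count
\[
R(\tau) = \int_\bR \prod_{i=1}^E S_i(\alp) \prod_{j=1}^t T_j(\alp) T_j'(\alp) \, K_\eta(\alp) e(-\alp\tau) \, d\alp
\]
lower-bounds the number of $\bx \in \bZ^s$ satisfying \eqref{main} or \eqref{main2}; so positivity of $R(\tau)$ for large $\tau$ gives $s_\iota(k) \le 2t + E$.

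Dissect $\bR = \fM \cup \fm \cup \ft$, where $\fM$ is a short major arc about $0$, $\ft$ the trivial arc $|\alp|>L$, and $\fm$ the remainder. On $\fM$, standard singular-series/singular-integral analysis produces the expected main term of order $\eta \tau^{s/k-1}$. This analysis requires $E \ge 2k-1$ for the underlying mean-value estimates and for positivity of the singular integral to go through---this accounts for the $2k-2$ entry of the maximum defining $E$. On $\ft$, the decay of $K_\eta$ together with crude Weyl bounds yields a negligible contribution.

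The main obstacle is the minor-arc estimate $R(\fm) = o(R(\fM))$. Following Freeman \cite{Fre2003}, I would introduce $\rho(P) = \sup_{\alp \in \fm}|S_1(\alp)|$; the irrationality hypothesis (or the irrationality of $\mu_1$) forces $\rho(P) = o(P)$, via a classical major/minor arc pruning \emph{within} $\fm$ that invokes Lemma \ref{ClassicalMajorIngredient} to rule out simultaneous small-denominator rational approximations of the coefficients of the $h_i$. A Freeman-style iterative pruning then bounds $R(\fm)$ by $(\rho(P)/P)^\delta$ times the expected size, for some $\delta > 0$. The pointwise/mean-value estimate underlying this is obtained by combining Weyl's inequality on the principal sums (saving a factor $P^{\sigma(k)}$ per sum) with mean-value bounds of the shape $\int |T_j|^2 K_\eta \,d\alp \ll \eta P_j + \eta^{-1}P_j^{2-k}$ for the diminishing-range pairs. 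Careful exponent bookkeeping distils this into the sufficient condition $E\sigma(k) > k(1-1/k)^t$, equivalently
\[
E > k(1-1/k)^t \sigma(k)^{-1},
\]
which is the second entry of the maximum defining $E$. The most delicate technical step is the pruning inside $\fm$, where one must carefully separate classical major and minor arcs and handle the former via the irrationality condition; once this is in place, $R(\tau) > 0$ for large $\tau$, giving $s_\iota(k) \le 2t + E$ for both $\iota = 0, 1$ uniformly.
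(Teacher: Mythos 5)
Your overall scheme — $E$ principal variables at scale $P$, $t$ diminishing-range pairs at scales $P^{(1-1/k)^j}$, a Davenport--Heilbronn kernel and dissection $\fM \cup \fm \cup \ft$, Freeman's bound on $\fm$, and a classical major/minor arc pruning via Lemma~\ref{ClassicalMajorIngredient} — is exactly the paper's. But there is a genuine gap in the central mean-value estimate. You write the bound $\int_\bR |T_j(\alp)|^2 K_\eta(\alp)\,\d\alp \ll \eta P_j + \eta^{-1}P_j^{2-k}$ \emph{per pair} and propose to combine these with Weyl on the principal sums. That combination is not available: the integral you need is of the \emph{product} $\int_\bR |g_i(\alp)|^2 \prod_{j} |g_{E+2j-1}(\alp)|^2\,K(\alp)\,\d\alp$, and H\"older would take you in the wrong direction (it gives upper bounds by \emph{splitting} a product into separate moments, each of which would require high enough moments to close, which defeats the point of the diminishing ranges). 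What actually makes the diminishing ranges work is the chained estimate — paper's Lemma~\ref{lazy} — proved by induction on $t$: the $j$th range is short enough that the $j$th pair must be essentially diagonal modulo the contribution from ranges $>j$, and the residual error is swallowed by the next scale. This yields $\int_\bR |g_i^2\prod_j g_{E+2j-1}^2|K\,\d\alp \ll P^{\Delta}$ with $\Delta=\sum_j(1-1/k)^{j-1}=k(1-(1-1/k)^t)$, and then the extra $E-\gam$ factors of $g_i$ are handled by $\sup_{\fn}|g_i|\le P^{1-\sig(k)+\eps}$, giving the condition $E\sig(k)>k-\Delta=k(1-1/k)^t$ that you correctly identified. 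Without the inductive Lemma~\ref{lazy}, the exponent bookkeeping you describe cannot be carried out.

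A secondary, smaller issue: you attribute the $2k-2$ entry of the maximum to ``positivity of the singular integral.'' In the paper it comes primarily from the $\eps$-free Hua-type estimate on \emph{classical} major arcs (Corollary~\ref{ClassicalMajorGeneral}), which needs exponent $u>2k$ and is applied with $u=E+2-\gam$, forcing $E\ge 2k-1$. The Davenport--Heilbronn major arc analysis only requires the weaker $E>k-2$. This doesn't change the statement, but it does misidentify where the constraint actually bites.
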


Choosing $t$ optimally gives Table \ref{table2}. Moreover, specialising 
\[
t=\lceil 2k \log k + k \log 2 \rceil
\]
in Theorem \ref{basicThm} and using the fact that 
\[ (1-1/k)^k \le 1/e\]
yields \eqref{largebound1} and \eqref{largebound0} when $k \ge 9$. For $4 \le k \le 8$, these inequalities follow from Tables \ref{table1} and \ref{table2}.

The methods developed in \cite{Cho2014b}, based on low moment estimates for Weyl sums, allow us to obtain better bounds for $s_1(k)$, particularly if $k$ is not too large. Owing to the technical nature of our general bound, we defer this until \S \ref{mainProof}. From the above discussion, it remains for us to establish Theorem \ref{asymptotic}, Theorem \ref{basicThm}, and the bounds implicit in Table \ref{table1}.

The bounds $C_1(k)$ given in Theorem \ref{s1bounds}, for the most part, fall far short of the corresponding records for Waring's problem, which we list below (see \cite{VW2002}). Here $G(k)$ is the least positive integer $s$ such that every sufficiently large positive integer is a sum of at most $s$ $k$th powers of positive integers, and $B(k)$ is the best known upper bound for $G(k)$.
\FloatBarrier
\begin{table}[h!]
\caption{Upper bounds for $s_1(k)$ versus upper bounds for $G(k)$.}
\label{table3}
\begin{center}
\begin{tabular*}{\textwidth}{@{} |r| @{\extracolsep{\fill} } ccccccccc|}
\hline
$k$   &$ 4$&$5$&$6$&$7$&$8$&$9$&$10$ & $11$ & $12$			 \\
\hline
$C_1(k)$ & 16&27&38&51&70&87&104&120&135 \\
\hline
$B(k)$ & 16 & 17 & 24 & 33 & 42 & 50 & 59 & 67 & 76 \\
\hline
\end{tabular*}
\end{center}
\end{table}
\FloatBarrier
The discrepancy is not surprising, since divisibility cannot be used in the inequalities case. One could argue that a fairer comparison would be to \cite{Vau1986jlms} and \cite{Vau1986ProcLMS}, which predated smooth numbers. There the bounds 
\[ G(5) \le 21, \quad G(6) \le 31, \quad G(7) \le 45, \quad G(8) \le 62, \quad G(9) \le 82 \]
are given, and Theorem \ref{s1bounds} provides comparable bounds.

Theorem \ref{asymptotic} uses Wooley's work \cite{Woo2014} on Vinogradov's mean value theorem, via the proof of \cite[Lemma 5.3]{BKW2010}. The theorem is then established via a recipe developed by Freeman \cite{Fre2002} and Wooley \cite{Woo2003}. Since an asymptotic formula is sought, diminishing ranges cannot be used here. The number of variables needed in Theorem \ref{asymptotic}, namely $2k^2-2k+3$, slightly exceeds the number currently required to obtain an asymptotic formula in Waring's problem (see \cite[Theorems 1.3 and 1.4]{Woo2014}). This is to be expected, since the latter uses Hua's lemma as an input, and we cannot use this since our polynomials may be irrational.

The work underpinning Theorem \ref{basicThm} will come as no surprise to Freeman's devotees. Indeed, we follow \cite{Fre2003}, with the only additional ingredient being Wooley's latest efficient congruencing work \cite{Woo2014}. The main purpose of this paper is to seek the best upper bounds on $s_1(k)$. We find that we can achieve better bounds on $s_1(k)$ by using more slowly diminishing ranges. These ranges were used in \cite{Vau1986ProcLMS}, however the methods that we use to produce low moment estimates for these ranges are necessarily different.

As the history of Waring's problem is well known, we merely point the reader towards \cite{VW2002}. For inhomogeneous additive diophantine equations, one can consult \cite[\S 11.4 and \S 12.4]{Nat2000}. Most research on diophantine inequalities has focussed on additive forms (see \cite{BKW2010} for a summary). At the opposite extreme, real forms have been considered in the most general settings (see \cite{Fre2000} and \cite{Sch1980}). As discussed, Freeman \cite{Fre2003} studied additive inhomogeneous polynomials. Other specialisations include Harvey's work \cite{Har2011} involving norm forms and the author's work \cite{Cho2014a} on split forms.

Since Margulis' resolution of the Oppenheim conjecture \cite{Mar1989}, dynamical techniques have proven to be a highly effective means of tackling quadratic diophantine inequalities. As mentioned, Margulis and Mohammadi \cite[Theorem 1.4]{MM2011} have generalised Margulis' result, showing that $s_0(2) \le 3$. G\"otze \cite{Goe2004} handled positive definite quadratic forms in five or more variables, thereby proving the Davenport-Lewis conjecture. As far as the author is aware, sums of shifted powers were first considered by Marklof \cite{Mar2003}, who studied sums of shifted squares in relation to the Berry-Tabor conjecture from quantum chaos. The cubic case was discussed in \cite{Cho2014b}, and here we examine quartic and higher degree polynomials.

We now expound on our strategy for proving Theorem \ref{basicThm} and the bounds implicit in Table \ref{table1}. The treatment of the Davenport-Heilbronn major arc is relatively standard. Classical minor arcs are treated using low moment estimates involving diminishing ranges. Our simultaneous rational approximations allow us to develop an $\eps$-free analogue to Hua's lemma on classical major arcs (see Lemma \ref{ClassicalMajorIngredient} and Corollary \ref{ClassicalMajorGeneral}). Finally, we invoke \cite[Lemmas 8 and 9]{Fre2003} to obtain a nontrivial upper bound on Davenport-Heilbronn minor arcs. The low moment estimates required for Theorem \ref{basicThm} come almost for free, due to the nature of the diminishing ranges. We use more ambitious diminishing ranges to obtain the bounds implicit in Table \ref{table1}. There we classify our classical major arcs according to the size of the denominator, and then apply different techniques appropriately.

The plan for this paper is as follows. In \S \ref{prelim}, we present work of Freeman which exploits the irrationality of $\mu_1$, introduce our main kernel function, analyse classical major arcs in some generality, and apply Wooley's work on Vinogradov's mean value theorem. In \S\S \ref{AF}--\ref{mainProof} we prove Theorem \ref{asymptotic}, Theorem \ref{basicThm}, and the bounds implicit in Table \ref{table1}, respectively.

We adopt the convention that $\eps$ denotes an arbitrarily small positive number, so its value may differ between instances. Bold face will be used for vectors, for instance we shall abbreviate $(x_1,\ldots,x_s)$ to $\bx$. For real numbers $x$, we denote by $\lfloor x \rfloor$ the greatest integer $n$ such that $n \le x$. We shall use the unnormalised sinc function, given by $\sinc(x) = \sin(x)/x$ for $x \in \bR \setminus \{0\}$ and $\sinc(0) = 1$. The pronumeral $P$ will always denote a large positive real number. We shall use $g(\alp)$, $g_i(\alp)$ and $f_j(\alp)$ to denote Weyl sums, to be explicitly defined in each situation. 

The author thanks Trevor Wooley for suggesting this line of research, as well as for his dedicated supervision. Thanks also to an anonymous referee for carefully reading this manuscript.

\section{Preliminaries}
\label{prelim}

The following observation is the starting point for some of our inductive proofs.

\begin{lemma} \label{SecondMoment}
Let $h$ be a real polynomial of degree $d \ge 2$. Let $x$ and $y$ be integers such that $x,y > P$ and
\[
|h(x) - h(y)| < \eta.
\]
Then $x=y$.
\end{lemma}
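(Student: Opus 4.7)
The plan is to argue by contradiction. Suppose $x \ne y$; by symmetry we may assume $x > y$, whence $x - y \ge 1$ since $x, y \in \bZ$. The goal is then to show that $|h(x) - h(y)|$ must in fact be much larger than $\eta$, forced by the polynomial growth of $h$ on the range $(P, \infty)$.

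To make this quantitative, I would apply the mean value theorem to write
\[
h(x) - h(y) = h'(\xi)(x - y)
\]
for some $\xi \in (y, x)$, and in particular $\xi > P$. Since $\deg h = d \ge 2$, the derivative $h'$ is a polynomial of degree $d-1 \ge 1$ whose leading coefficient $d \, a_d$ is nonzero. Hence, for $P$ sufficiently large in terms of the coefficients of $h$ alone, one has $|h'(\xi)| \ge \tfrac{1}{2} d |a_d| P^{d-1}$ whenever $\xi > P$.

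Combining these two observations with $x - y \ge 1$ gives
\[
|h(x) - h(y)| \ge \tfrac{1}{2} d |a_d| P^{d-1},
\]
and since $d - 1 \ge 1$ this right-hand side exceeds $\eta$ once $P$ is large enough in terms of $h$ and $\eta$. This contradicts the hypothesis $|h(x) - h(y)| < \eta$, forcing $x = y$.

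There is no substantive obstacle here; the only subtlety is the bookkeeping convention that "sufficiently large $P$" must depend only on $h$ and $\eta$ (not on $x$ or $y$), which is consistent with the paper's declaration that $P$ is a large positive real parameter. The hypothesis $d \ge 2$ is used precisely to ensure $d - 1 \ge 1$, so that $P^{d-1} \to \infty$; for $d = 1$ the statement would be false in general, since $h(x) - h(y)$ could then be a bounded nonzero constant multiple of $x - y$, independent of the size of $x$ and $y$.
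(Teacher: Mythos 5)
Your proof is correct and matches the paper's argument: both use the mean value theorem to show $|h(x)-h(y)| \gg P^{d-1}|x-y|$, forcing $|x-y|<1$ and hence $x=y$. The only cosmetic difference is that you frame it as a contradiction rather than directly concluding $|x-y|<1$.
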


\begin{proof} The mean value theorem gives
\[ (h(x)-h(y)) / (x-y) \gg P^{d-1}, \]
so $|x-y| < 1$.
\end{proof}

We will require Freeman's bounds on Davenport-Heilbronn minor arcs. In \cite[Lemmas 8 and 9]{Fre2003}, the underlying variables lie in the range $(0,P]$. The same results hold, with the same proof, when the underlying variables lie in $(bP,cP]$ for some fixed real numbers $b \ge 0$ and $c > b$. We summarise this in the lemma below. For $h \in \bR[x]$, and for real numbers $b \ge 0$ and $c > b$, we shall write
\[
g_{b,c}(\alp; h) = \sum_{bP < x \le cP} e(\alp h(x)).
\]
Note the identity
\[
g_{b,c}(\alp; h) = g_{0,c}(\alp; h) - g_{0,b}(\alp; h),
\]
which will be used to infer certain bounds.

\begin{lemma} \label{Freeman}
Let $k \ge 2$ be an integer, let $\xi < 1$ be a positive real number, and let $h_1, h_2 \in \bR[y]$ be degree $k$ polynomials satisfying the irrationality condition. Let $0 \le b < c$, and let
\[ 
g_i(\alp) = g_{b,c}(\alp; h_i), \qquad i=1,2. 
\]
Then there exists a positive real-valued function $T(P)$ such that 
\[ \lim_{P \to \infty} T(P) = \infty\]
and
\begin{equation} \label{FreemanEq}
\sup_{ P^{\xi-k} \le |\alp| \le T(P)} 
|g_1(\alp) g_2(\alp)|
\ll P^2T(P)^{-1}.
\end{equation}
\end{lemma}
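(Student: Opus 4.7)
The plan is to identify the statement as a notational reformulation of \cite[Lemmas 8 and 9]{Fre2003}, which establish \eqref{FreemanEq} for Weyl sums over variables in $(0,P]$, and to verify that nothing is lost when the range is shifted to $(bP,cP]$. Concretely, the identity
\[ g_{b,c}(\alp; h) = g_{0,c}(\alp; h) - g_{0,b}(\alp; h) \]
expands $g_1(\alp) g_2(\alp)$ as a signed sum of at most four products $g_{0,c_1}(\alp; h_1) g_{0,c_2}(\alp; h_2)$ with $c_1, c_2 \in \{b,c\}$. Since the irrationality condition depends only on the coefficients of $h_1, h_2$ and not on the range of summation, Freeman's result applies to each such product, with $P$ rescaled by $c_1$ or $c_2$; taking $T(P)$ to be the minimum of the four resulting functions absorbs the constants $b, c$ into the implied constant.

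For completeness, Freeman's argument for bounding a single product runs as follows. Suppose toward a contradiction that both $|g_1(\alp)|$ and $|g_2(\alp)|$ exceed $P T(P)^{-1/2}$ at some $\alp$ with $P^{\xi-k} \le |\alp| \le T(P)$. An appropriate Weyl-type inequality applied to each sum yields, for each $i \in \{1,2\}$ and each relevant $j$, a simultaneous rational approximation of the shape $|q\alp\beta_{ij} - a_{ij}| \ll P^{-j+\eps}$ with $q$ of controlled size. Eliminating $q\alp$ using the pair of indices $(i_1,j_1), (i_2,j_2)$ supplied by the irrationality condition produces a rational approximation to the irrational $\beta_{i_1j_1}/\beta_{i_2j_2}$ whose error tends to zero as $P \to \infty$, a contradiction for $\alp$ bounded away from zero and infinity in exactly the prescribed way.

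The main subtlety --- and the part that is genuinely Freeman's contribution --- is the diagonal construction of $T(P)$: one must choose it to tend to infinity slowly enough that the rational approximations remain valid uniformly across $P^{\xi-k} \le |\alp| \le T(P)$, yet fast enough that the bound is nontrivial. Once this is in place, no further work is needed to accommodate variables in the translated range $(bP,cP]$, since the Weyl-type estimates depend on the interval only through its length $\asymp P$ and are otherwise translation invariant.
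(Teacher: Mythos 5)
Your proposal is essentially correct and mirrors the paper's approach: the paper also simply cites Freeman's Lemmas 8 and 9 and asserts that the same proof goes through when the summation range is shifted from $(0,P]$ to $(bP,cP]$, precisely because the Weyl estimates depend on the interval only through its length. Your subtraction identity $g_{b,c} = g_{0,c} - g_{0,b}$ is a clean alternative black-box reduction that the paper records in \S\ref{prelim} but does not invoke for this lemma specifically, so that part is a genuine (small) variant, and a perfectly sound one; note only that the four products it produces have mismatched intervals $(0,c_1P]$ and $(0,c_2P]$ when $c_1 \ne c_2$, so "rescaling $P$" is not quite the right description, though Freeman's argument (both factors being large forces good rational approximations and hence a contradiction with irrationality) applies just as well with unequal ranges.

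One point you elide is that Freeman's Lemmas 8 and 9 as stated conclude $\sup |g_1 g_2| = o(P^2)$, not the quantitatively sharper $\ll P^2 T(P)^{-1}$ appearing in \eqref{FreemanEq}. The paper explicitly addresses this: the $o(P^2)$ statement already carries with it a function $T_1(P) \to \infty$ with $\sup |g_1 g_2| \ll P^2 T_1(P)^{-1}$, and replacing $T(P)$ by $\min(T(P), T_1(P))$ gives \eqref{FreemanEq}. Your last paragraph gestures at the diagonal construction of $T(P)$, which is the right instinct, but you should make the $o(P^2) \Rightarrow \ll P^2 T(P)^{-1}$ upgrade explicit rather than attributing the stronger form to Freeman directly.
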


This may appear stronger than Freeman's conclusion that
\begin{equation} \label{Freeman0} 
\sup_{ P^{\xi-k} \le |\alp| \le T(P)} |g_1(\alp) g_2(\alp)| = o(P^2).
\end{equation}
However, the bound \eqref{Freeman0} gives a positive real-valued function $T_1(P)$ such that 
\[ \lim_{P \to \infty} T_1(P) = \infty \]
and
\[
\sup_{ P^{\xi-k} \le |\alp| \le T(P)} |g_1(\alp) g_2(\alp)| \ll P^2 T_1(P)^{-1}.
\]
By putting $T_0(P) = \min(T(P),T_1(P))$, we obtain \eqref{FreemanEq} with $T_0(P)$ in place of $T(P)$. 

We will make particular use of the kernel function
\begin{equation*}
K(\alpha) = K_\eta(\alp)= \eta \cdot \sinc^2(\pi \alp \eta).
\end{equation*}
This was first used by Davenport and Heilbronn \cite{DH1946}. It satisfies
\begin{equation} \label{Kbound} 
0 \le K(\alp) \ll \min(1, |\alp|^{-2})
\end{equation} 
and, for any real number $t$,
\begin{equation}\label{orth1}
\int_\bR e(\alp t) K(\alp) \d \alp = \max(0, 1- |t/\eta|).
\end{equation}
Similarly
\begin{equation}\label{orth2}
4\int_\bR e(\alp t) K(2\alp) \d \alp = \max(0, 2- |t/\eta|).
\end{equation}
For $\kappa > 0$, we define the indicator function
\begin{equation} \label{ind}
U_\kappa(t)  = \begin{cases}
1, &\text{if } |t| < \kappa\\
0, &\text{if } |t| \ge \kappa.
\end{cases}
\end{equation}
By \eqref{orth1} and \eqref{orth2} we have
\begin{equation} \label{OrthBounds}
0 \le \int_\bR e(\alp t) K(\alp) \d \alp \le U_\eta(t) \le 4\int_\bR e(\alp t) K(2\alp) \d \alp \le 2 U_{2\eta}(t).
\end{equation}

The following lemma is integral to our classical major arc analysis. The idea is that if a Weyl sum is large then its coefficients have good simultaneous rational approximations. Given such rational approximations, we can follow a standard procedure to bound the Weyl sum. Recall \eqref{sigmaDef}.

\begin{lemma} \label{ClassicalMajorIngredient}
Let $d \ge 2$ be an integer, and let $h \in \bR[x]$ be a monic polynomial of degree $d$. Let $0 \le b < c$, and let $g(\alp) = g_{b,c}(\alp; h)$. Let $\alp \in \bR$, and assume that
\[ |g(\alp)| > P^{1-\sigma(d)+\eps}. \]
Then there exist relatively prime integers $a$ and $q$ such that
\begin{equation} \label{bbound} 
0 < q < P^{d\sigma(d)}, \qquad |q \alp -a| < P^{d \sigma(d)-d}
\end{equation}
and
\[
g(\alp) \ll q^{\eps - 1/d}P (1+P^d|\bet|)^{-1/d},
\]
where $\bet = \alp - a/q$. The implicit constant does not depend on the coefficients of $h$.
\end{lemma}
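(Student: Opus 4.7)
The plan is a two-stage argument: a Dirichlet approximation followed by a uniform major-arc estimate. Combining this estimate with the hypothesis both forces $q < P^{d\sigma(d)}$ and delivers the final displayed bound on $g(\alp)$.

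First, I would apply Dirichlet's approximation theorem to $\alp$ with parameter $Q := P^{d - d\sigma(d)}$, obtaining coprime integers $a, q$ with $1 \le q \le Q$ and $|q\alp - a| \le 1/Q = P^{d\sigma(d) - d}$; this already supplies the bound on $|q\alp - a|$ required in \eqref{bbound}. Next, I would establish the estimate $g(\alp) \ll q^{\eps - 1/d} P(1 + P^d|\bet|)^{-1/d}$ uniformly for such $(a, q)$, with $\bet := \alp - a/q$. The strategy, in the spirit of Vaughan's classical major-arc analysis, is to split $x = r + qm$ into arithmetic progressions modulo $q$. The Taylor expansion of $h(r + qm)$ in $m$ yields an inner polynomial phase of degree $d$ in $m$ with leading coefficient $\alp q^d = aq^{d-1} + \bet q^d$, which differs from the integer $aq^{d-1}$ only by the small perturbation $\bet q^d$. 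I would then approximate each inner sum by the integral $\int e(\bet h(u))\,\d u \ll P(1 + P^d|\bet|)^{-1/d}$, valid by the change of variables $v = h(u)$ since the monic polynomial $h$ is monotonic on $(bP, cP)$ for large $P$, and extract the factor $q^{\eps - 1/d}$ from the sum over residues $r$ via a divisor-bound analysis. Combined with the hypothesis $|g(\alp)| > P^{1 - \sigma(d) + \eps}$, this forces $q^{\eps - 1/d} P \gg P^{1 - \sigma(d) + \eps}$, whence $q < P^{d\sigma(d)}$, completing \eqref{bbound}.

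The main technical obstacle is ensuring uniformity of the $q^{-1/d}$ factor in the real coefficients of $h$. Classically, this factor arises from the complete Gauss sum $\sum_{r \bmod q} e(ah(r)/q)$, whose bound $\ll q^{1 - 1/d + \eps}$ relies on $h$ having integer coefficients. For real $h$, the Taylor expansion within each residue class places the real lower-order coefficients of $h$ into the lower-order $m$-terms of the inner polynomial phase, where after iterated Weyl differencing they contribute only constant additive phases that do not affect modulus estimates. The $q^{-1/d}$ saving can then be recovered from the distribution of products of integer differences modulo $q$, uniformly in the coefficients of $h$.
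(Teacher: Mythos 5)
Your proposal reverses the logical order of the paper's argument, and the reversal is not harmless: it leaves a genuine gap. You begin with Dirichlet approximation applied to $\alp$ alone, which gives control of only the leading coefficient $\alp_d = \alp$ of the polynomial phase $\alp h(x) = \sum_{j=0}^d \alp_j x^j$. But the major-arc factorisation $g(\alp) \approx q^{-1} S(q,\bv) I(\bbeta)$ that you are implicitly invoking requires simultaneous rational approximations of \emph{all} the coefficients $\alp_1, \ldots, \alp_d$, not just $\alp_d$. After you split $x = r + qm$ and Taylor expand, the coefficient of $m^j$ in the inner phase involves terms like $\alp \beta_{d-1}\binom{d-1}{j}r^{d-1-j}q^j$ for the real lower-order coefficient $\beta_{d-1}$ of $h$; there is no reason whatever for $\alp\beta_{d-1}$ to be close to a rational with denominator $q$, so the inner sum over $m$ has no approximation by the continuous integral $\int e(\bet h(u))\,\d u$. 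Your attempted repair via ``iterated Weyl differencing'' conflates the minor-arc Weyl inequality (which does eventually isolate $\gamma_d$ after $d-1$ differencings, but at the cost of a $2^{d-1}$th power and yields only $|g| \ll P^{1-\sigma(d)+\eps}$) with the much stronger pointwise major-arc bound $q^{\eps-1/d}P(1+P^d|\bet|)^{-1/d}$ that the lemma asserts. Weyl differencing cannot produce the latter.

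The paper's proof avoids this trap by using the hypothesis $|g(\alp)| > P^{1-\sigma(d)+\eps}$ at the very first step, not the last. It invokes a Weyl-type rational-approximation lemma (Baker's Theorem 5.1 for $d \le 8$, Wooley's Theorem 11.2 plus Baker's Lemma 4.6 for $d \ge 9$) which says precisely that a large Weyl sum forces the \emph{simultaneous} existence of integers $q_0, v_1, \ldots, v_d$ with $|q_0\alp_j - v_j| < P^{d\sigma(d)-j}$ for every $j$. Only then, with all coefficients controlled, does Baker's Lemma 4.4 justify the approximation $g(\alp) - q_0^{-1}S(q_0,\bv)I(\bbeta) \ll q_0^{1-1/d}P^\eps$, and Vaughan's Theorems 7.1 and 7.3 bound the complete sum and the integral. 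In short, the hypothesis is the \emph{input} that unlocks simultaneous Diophantine approximation; you cannot instead start with single-variable Dirichlet approximation and hope to recover the simultaneous control afterwards.
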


\begin{proof} Define $\alp_0, \ldots, \alp_d$ by
\[ 
\alp h(x) = \sum_{i=0}^d \alp_i x^i, 
\]
and note that $\alp_d = \alp$. For $q \in \bN$ and $\bv \in \bZ^d$, put
\[ S(q,\bv) = \sum_{x=1}^q e((v_d x^d + \ldots+v_1x)/q). \]
For $\beta_0, \beta_1, \ldots, \beta_d \in \bR$, put
\[
I(\bbeta) = \int_{bP}^{cP} e(\bet_d x^d + \ldots + \bet_1 x + \bet_0) \d x.
\]

First assume that $d \le 8$. At least one of $|g_{0,b}(\alp; h)|$ and $|g_{0,c}(\alp; h)|$ must exceed $\frac12 P^{1-\sigma(d)+\eps}$. Thus, by \cite[Theorem 5.1]{Bak1986}, there exist $q_0 \in \bN$ and $\bv \in \bZ^d$ such that
\[
q_0 < P^{d \sigma(d)}, \qquad (q_0, v_d, \ldots, v_1) = 1, \qquad (q_0,v_d, \ldots, v_2) < P^\eps
\]
and
\[
|q_0 \alp_j - v_j| < P^{d\sigma(d)- j} \qquad (1 \le j \le d).
\]
Put $\bet_0 = \alp_0$ and
\[ \bet_j = \alp_j - q_0^{-1} v_j \qquad (1 \le j \le d). \]
It follows from \cite[Lemma 4.4]{Bak1986} that
\[
g(\alp) - q_0^{-1}S(q_0,\bv)I(\bbeta) \ll q_0^{1-1/d} P^\eps.
\]
Now \cite[Theorems 7.1 and 7.3]{Vau1997} give
\begin{align*} g(\alp) &\ll  q_0^{1-1/d}P^\eps + q_0^{\eps-1/d}P(1+P^d|\bet_d|)^{-1/d} 
\\ 
&\ll q_0^{\eps-1/d}P(1+P^d|\bet_d|)^{-1/d} . 
\end{align*}
The integers $a = v_d / (q_0, v_d)$ and $q = q_0 / (q_0, v_d)$ have the desired properties.

It remains to consider $d \ge 9$. From \cite[Theorem 11.2]{Woo2014} we deduce that there exist $r \in \bN$ and $\ba \in \bZ^d$ such that $r < P^{d \sigma(d)}$
and
\[
|r \alp_j - a_j| < P^{d\sigma(d)- j} \qquad (1 \le j \le d).
\]
Now \cite[Lemma 4.6]{Bak1986} gives $q_0 \in \bN$ and $\bv \in \bZ^d$ such that
\[ 
q_0 < P^{d \sigma(d)}, \qquad (q_0, v_d, \ldots, v_2) \le 2d^2
\]
and
\[ 
|q_0 \alp_j - v_j| < P^{d\sigma(d)- j} \qquad (1 \le j \le d).
\]
Dividing by $(q_0, v_d, \ldots, v_1)$, we may assume further that $(q_0, v_d, \ldots, v_1) = 1$. The rest of the proof is identical to the case $d \le 8$.
\end{proof}

This allows us to formulate an $\eps$-free analogue to Hua's lemma on classical major arcs. 

\begin{cor} \label{ClassicalMajorGeneral} Let $k \ge 2$ be an integer, and let $u > 2k$ be a real number. Fix a degree $k$ polynomial $h \in \bR[x]$, and fix $L>0$.  Let $0 \le b < c$, and let $g(\alp) = g_{b,c}(\alp; h)$. Put
\[ \fN = \{ \alp \in \bR : |g(\alp)| > P^{1-\sigma(k)+\eps} \}, \]
and let $\fU$ be the intersection of $\fN$ with an interval of length $L$. Then
\begin{equation} \label{cmgeq} 
\int_\fU |g(\alp)|^u \d \alp \ll_{h,L} P^{u-k}.
\end{equation}
\end{cor}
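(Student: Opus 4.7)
The plan is to cover $\fU$ by classical Hardy--Littlewood major arcs produced by Lemma \ref{ClassicalMajorIngredient}, and then integrate the pointwise bound that the same lemma supplies over each arc. First I would reduce to the case where $h$ is monic: writing $h = \lam_k \tilde h$ where $\tilde h$ is monic and $\lam_k \ne 0$ is the leading coefficient of $h$, the substitution $\gamma = \lam_k \alp$ converts $g(\alp)$ into $g_{b,c}(\gamma; \tilde h)$ and sends the interval of length $L$ to one of length $|\lam_k|L$. Since $\lam_k$ depends only on $h$, the resulting Jacobian and dilation are absorbed into the implicit constant $\ll_{h,L}$.

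Assuming henceforth that $h$ is monic, Lemma \ref{ClassicalMajorIngredient} furnishes, for each $\alp \in \fN$, coprime integers $a$ and $q$ with $0 < q \le Q := P^{k\sigma(k)}$ and $|\alp - a/q| < q^{-1}P^{k\sigma(k)-k}$, satisfying
\[ |g(\alp)| \ll q^{\eps - 1/k} P \bigl(1 + P^k|\alp - a/q|\bigr)^{-1/k}. \]
Consequently $\fU$ is covered by the arcs $\fM(q,a) := \{\alp : |\alp - a/q| < q^{-1}P^{k\sigma(k)-k}\}$ with $q \le Q$ and $a$ running over those reduced residues modulo $q$ for which $a/q$ lies within $P^{k\sigma(k)-k} = o(1)$ of the given interval. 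For each $q$, the number of admissible $a$ is $O(qL + 1)$.

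Substituting the pointwise bound and changing variables to $\bet = \alp - a/q$ on each arc yields
\[ \int_\fU |g(\alp)|^u \d \alp \ll \sum_{q \le Q} (qL + 1) q^{u(\eps - 1/k)} P^u \int_\bR (1 + P^k|\bet|)^{-u/k} \d \bet. \]
Because $u/k > 2$, the inner integral evaluates to $O(P^{-k})$, while the outer sum splits as $L \sum_{q \le Q} q^{1 - u/k + u\eps} + \sum_{q \le Q} q^{-u/k + u\eps}$. The hypothesis $u > 2k$ gives $u(1/k - \eps) > 2$ for $\eps$ sufficiently small, so both series converge uniformly in $Q$, yielding the asserted bound $\ll_{h,L} P^{u-k}$. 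The main obstacle is nothing more than careful book-keeping: tracking how the reduction to monic $h$ interacts with the exceptional set $\fN$ and the length-$L$ interval, and verifying that the $a$-count $O(qL+1)$ accommodates the $o(1)$ fattening of the interval coming from $P^{k\sigma(k)-k}$. Once these points are handled, the estimate is a direct consequence of Lemma \ref{ClassicalMajorIngredient}.
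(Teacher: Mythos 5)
Your proposal is correct and follows essentially the same route as the paper: reduce to monic $h$ by rescaling $\alp$, invoke Lemma \ref{ClassicalMajorIngredient} to cover $\fU$ by arcs $\fM(q,a)$ with $q < P^{k\sigma(k)}$, count $O_L(q)$ admissible $a$ per $q$, substitute the pointwise bound, and integrate; the inner $\bet$-integral gives $O(P^{-k})$ and the $q$-sum converges because $u > 2k$. The paper's write-up simply absorbs the $O(qL+1)$ count and the $q^{u\eps}$ factor into a single $q^{1+\eps-u/k}$ with the usual $\eps$-abuse, whereas you split the sum explicitly, but this is only cosmetic.
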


\begin{proof} By changing variables, we may assume that $h$ is monic. We then apply Lemma \ref{ClassicalMajorIngredient} with $d=k$; note that if $q \in \bN$ then there are $O_L(q)$ integers $a$ satisfying \eqref{bbound} for some $\alp \in \fU$. Hence
\[
\int_\fU |g(\alp)|^u \d \alp \ll 
P^u \sum_{q < P^{k\sigma(k)}} q^{1+\eps-u/k} J,
\]
where
\[  
J = \int_0^\infty (1+P^k \bet)^{-u/k} \d \bet \ll P^{-k}.
\]
As $u > 2k$ and $\eps$ is small, we now have \eqref{cmgeq}.
\end{proof}

Lastly, we will need the following application of Wooley's work on Vinogradov's mean value theorem.

\begin{lemma} \label{EClem}
Let $\phi$ be a polynomial of degree $k \ge 3$ with real coefficients, let $X$ be a large positive real number, and let $\eta$ be a positive real number. Let $t \ge k^2-k+1$ be an integer, and let $U_{\phi,t}(X)$ denote the number of integer solutions to the inequality
\[ \Bigl| \sum_{j\le t} (\phi(x_j) - \phi(y_j)) \Bigr| < \eta \]
with $1 \le x_j, y_j \le X$. Then
\[ U_{\phi,t}(X) \ll X^{2t-k+\eps}. \]
\end{lemma}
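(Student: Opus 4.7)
The plan is to reduce the inequality-counting problem to the classical Vinogradov mean value theorem by classifying solutions according to the auxiliary integer data
\[ \sigma_i = \sum_{j=1}^t (x_j^i - y_j^i) \qquad (1 \le i \le k). \]
Writing $\phi(x) = \sum_{i=0}^k \beta_i x^i$ with $\beta_k \ne 0$, we have
\[ \sum_{j=1}^t (\phi(x_j) - \phi(y_j)) = \sum_{i=1}^k \beta_i \sigma_i, \]
so that the inequality defining $U_{\phi,t}(X)$ becomes a single Diophantine inequality in $\boldsymbol{\sigma}$. Denoting by $J(\boldsymbol{\sigma})$ the number of $2t$-tuples of integers in $[1,X]$ realising a given $\boldsymbol{\sigma}$, we have
\[ U_{\phi,t}(X) = \sum_{\boldsymbol{\sigma} \in \mathcal{S}} J(\boldsymbol{\sigma}), \]
where $\mathcal{S}$ is the set of $\boldsymbol{\sigma} \in \bZ^k$ satisfying $|\sigma_i| \le t X^i$ and $|\sum_{i=1}^k \beta_i \sigma_i| < \eta$.

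The first key step is to dominate each $J(\boldsymbol{\sigma})$ by the full Vinogradov mean value $J_{t,k}(X) = J(\boldsymbol{0})$. This is the standard observation that the Fourier-theoretic expression
\[ J(\boldsymbol{\sigma}) = \int_{[0,1]^k} \Big|\sum_{1 \le x \le X} e(\alpha_1 x + \ldots + \alpha_k x^k)\Big|^{2t} e(-\alpha_1 \sigma_1 - \ldots - \alpha_k \sigma_k) \,\d\boldsymbol{\alpha} \]
is maximised in absolute value at $\boldsymbol{\sigma} = \boldsymbol{0}$. I would then invoke Wooley's efficient congruencing result \cite{Woo2014}, which establishes the main conjecture of Vinogradov's mean value theorem in the required range $t \ge k^2-k+1$ and delivers
\[ J_{t,k}(X) \ll X^{2t - k(k+1)/2 + \eps}. \]
This is the single nontrivial analytic input.

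The second key step is an elementary lattice-point count for $\#\mathcal{S}$. Since $\beta_k \ne 0$, for any choice of $(\sigma_1, \ldots, \sigma_{k-1})$ the inequality $|\sum_i \beta_i \sigma_i| < \eta$ confines $\sigma_k$ to an interval of length $2\eta/|\beta_k|$, giving $O_{\eta,\phi}(1)$ admissible values of $\sigma_k$. Hence
\[ \#\mathcal{S} \ll \prod_{i=1}^{k-1}(2tX^i + 1) \ll X^{k(k-1)/2}. \]
Combining the two estimates yields
\[ U_{\phi,t}(X) \ll X^{k(k-1)/2} \cdot X^{2t - k(k+1)/2 + \eps} = X^{2t - k + \eps}, \]
as required. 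The main obstacle is conceptual rather than technical: recognising that the single inequality on $\sum \beta_i \sigma_i$ saves exactly one factor of $X^k$ over the trivial bound $\#\mathcal{S} \ll X^{k(k+1)/2}$, and that Wooley's theorem contributes the remaining saving. Once the decomposition is in place, both steps are essentially immediate.
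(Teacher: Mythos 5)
Your proof is correct and takes essentially the same approach as the paper: reduce to the classical Vinogradov mean value count via the auxiliary sums $\sigma_i = \sum_j (x_j^i - y_j^i)$, dominate each fibre count $J(\boldsymbol{\sigma})$ by $J_{t,k}(X)$ via the Fourier expression, count the admissible $\boldsymbol{\sigma}$ using the inequality to confine $\sigma_k$ to $O_{\eta,\phi}(1)$ values, and then apply Wooley's efficient congruencing bound. The only difference is cosmetic: the paper delegates the reduction $U_{\phi,t}(X) \ll X^{k(k-1)/2} J_{t,k}(X)$ to the proof of \cite[Lemma 5.3]{BKW2010}, whereas you spell that argument out explicitly.
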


\begin{proof} The proof of \cite[Lemma 5.3]{BKW2010} shows that
\[ 
U_{\phi,t}(X) \ll X^{k(k-1)/2}J_{t,k}(X), 
\]
where $J_{t,k}(X)$ is the number of integer solutions to the system
\[ 
\sum_{j \le t} (x_j^l - y_j^l) = 0 \qquad (1 \le l \le k) 
\]
with $1 \le x_j, y_j \le X$. Moreover, from \cite[Corollary 1.2]{Woo2014} we have
\[ J_{t,k}(X) \ll X^{2t-k(k+1)/2+\eps}, \]
completing the proof.
\end{proof}

\section{An asymptotic formula}
\label{AF}

In this section we prove Theorem \ref{asymptotic}. Let $\tau$ be a large positive real number, and put
\begin{equation} \label{Pdef}
P=\tau^{1/k}.
\end{equation}
We may plainly assume that
\begin{equation} \label{wlog}
0 \le \mu_1, \ldots, \mu_s < 1.
\end{equation}
One can easily check that
\[ N(\tau) - N^*(\tau) \ll P^{(k-1)(s-1)/k} = o(\tau^{s/k-1}),\]
where $N^*(\tau)$ is the number of integral $\bx \in [1,P]^s$ satisfying \eqref{main}. It therefore suffices to prove the theorem  with $N^*(\tau)$ in place of $N(\tau)$.

For $i=1,2,\ldots,s$, put
\[ g_i(\alp) = \sum_{x \le P} e(\alp(x-\mu_i)^k). \]
Let $0 < \xi < 1$, and recall that $\mu_1 \notin \bQ$. With $T(P)$ as in Lemma \ref{Freeman}, applied to the polynomials $(x-\mu_1)^k$ and $(x-\mu_2)^k$, we define our Davenport-Heilbronn major arc by
\begin{equation} \label{dh1}
\fM = \{ \alp \in \bR : |\alp| \le P^{\xi - k} \}, 
\end{equation}
our minor arcs by
\begin{equation} \label{dh2}
\fm = \{ \alp \in \bR : P^{\xi - k} < |\alp| \le T(P) \}, 
\end{equation}
and our trivial arcs by
\begin{equation} \label{dh3}
\ft = \{ \alp \in \bR :  |\alp| > T(P) \}.
\end{equation}

Next we deploy a kernel function introduced in \cite[\S 2.1]{Fre2002}. Put 
\begin{equation} \label{Ldef}
L(P) = \min(\log T(P), \log P), \qquad \delta = \eta L(P)^{-1}
\end{equation}
and
\[ K_{\pm}(\alp) = \frac {\sin(\pi \alp \delta) \sin(\pi \alp(2 \eta \pm \delta))} {\pi^2 \alp^2 \delta}. \]
From \cite[Lemma 1]{Fre2002} and its proof, we have
\begin{equation} \label{Kbounds}
K_\pm(\alp) \ll \min(1, L(P) |\alp|^{-2})
\end{equation}
and
\begin{equation} \label{Ubounds}
0 \le \int_\bR e(\alp t) K_{-}(\alp)\d\alp \le U_\eta(t) \le \int_\bR e(\alp t) K_{+}(\alp)\d\alp \le 1,
\end{equation}
where we recall the definition \eqref{ind}. Moreover, the expression
\[ \Bigl|  \int_\bR e(\alp t) K_\pm(\alp) \d \alp - U_\eta(t) \Bigr| \]
is less than or equal to 1, and is equal to 0 whenever $| | t|- \eta| > \eta L(P)^{-1}$. 

It will be convenient to work with nonnegative kernels in part of the analysis, as in \cite[\S 2]{PW2013}. We note that
\begin{equation} \label{decompose}
|K_{\pm}(\alp)|^2 = K_1(\alp)K_2^{\pm}(\alp),
\end{equation}
where
\[ K_1(\alp) = \sinc^2(\pi \alp \delta) \]
and
\[ K_2^{\pm}(\alp) = (2\eta \pm \delta)^2 \sinc^2(\pi \alp (2\eta \pm \delta)). \]
As \eqref{OrthBounds} holds for all $\eta > 0$, we also have
\begin{equation} \label{aux1}
0 \le \int_\bR e(\alp t) K_1(\alp) \d \alp \le \delta^{-1} U_\delta(t) \ll L(P) \cdot U_\delta(t)
\end{equation}
and
\begin{equation} \label{aux2}
0 \le \int_\bR e(\alp t) K_2^{\pm}(\alp) \d \alp \le (2\eta \pm \delta) U_{2\eta \pm \delta}(t) \ll U_{2 \eta \pm \delta}(t).
\end{equation}

From \eqref{Ubounds} we have
\[
R_{-}(P)  \le N^*(\tau) \le R_+(P), 
\]
where 
\[ R_\pm(P) = \int_\bR g_1(\alp) \cdots g_s(\alp) e(-\alp \tau) K_\pm(\alp) \d \alp. \]
It therefore remains to show that
\begin{equation} \label{goal3HP} 
R_\pm(P) = 2 \eta \Gam(1+1/k)^s \Gam(s/k)^{-1} P^{s-k} + o(P^{s-k}).
\end{equation}
We begin by demonstrating the bound
\begin{equation} \label{rest3HP}
\int_{\fm \cup \ft} g_1(\alp) \cdots g_s(\alp) e(-\alp \tau)K_\pm(\alp) \d \alp = o(P^{s-k}).
\end{equation}
For this purpose it suffices, by symmetry and H\"older's inequality, to prove that
\begin{equation} \label{rest3altHP}
\int_{\fm \cup \ft} |g_1(\alp)g_2(\alp)g_3(\alp)^{s-2} K_\pm(\alp)| \d \alp = o(P^{s-k}).
\end{equation}

Recalling \eqref{sigmaDef}, let
\[ \fN = \{ \alp \in \bR : |g_3(\alp)| > P^{1- \sig(k)+\eps} \}, \]
put $\fn = \bR \setminus \fN$, and let $\fU$ be the intersection of $\fN$ with a unit interval. For subsets $U \subseteq \bR$, write 
\[ \cI_\pm(U) = \int_U |g_1(\alp)g_2(\alp)g_3(\alp)^{s-2} K_\pm(\alp)| \d \alp.\]

By assumption we have $s \ge 2k^2 - 2k + 3$. Thus, by \eqref{aux1}, \eqref{aux2}, Lemma \ref{EClem} and a trivial estimate we have
\[ 
\int_\bR |g_i(\alp)|^{s-1} K_1(\alp) \d \alp \ll P^{s-1-k+\eps}L(P) 
\]
and
\[ \int_\bR |g_i(\alp)|^{s-1} K_2^\pm (\alp) \d \alp \ll P^{s-1-k+ \eps} \]
for $i = 1,2,3$. Cauchy's inequality, \eqref{Ldef} and \eqref{decompose} now give
\[ 
\int_\bR |g_i(\alp)^{s-1} K_\pm(\alp)| \d \alp \ll P^{s-1-k+ 2\eps} \qquad (i=1,2,3).
\]
Therefore, by H\"older's inequality, we have
\[ \int_\bR |g_1(\alp) g_2(\alp)  g_3(\alp)^{s-3} K_\pm(\alp)| \d \alp \ll P^{s-1-k+\eps}, \]
so
\begin{align} \notag
\cI_\pm(\fn) \ll (\sup_{\alp \in \fn} |g_3(\alp)|) \cdot P^{s-1-k+\eps}
\\ 
\label{ClassicalMinor3HP} \le P^{s-k-\sigma(k)+2\eps} = o(P^{s-k}).
\end{align}

Combining Corollary \ref{ClassicalMajorGeneral} with \eqref{FreemanEq} gives
\begin{align*}
\int_{\fm \cap \fU}  |g_1(\alp)g_2(\alp)g_3(\alp)^{s-2}| \d \alp 
&\ll (\sup_{\alp \in \fm} |g_1(\alp) g_2(\alp)|) \cdot P^{s-2-k} \\
&\ll P^{s-k} T(P)^{-1}
\end{align*}
which, recalling \eqref{Ldef} and \eqref{Kbounds}, yields
\begin{equation} \label{MinorMajor3HP}
\cI_\pm(\fm \cap \fN) \ll P^{s-k}T(P)^{-1}L(P) = o(P^{s-k}).
\end{equation}
By Corollary \ref{ClassicalMajorGeneral} and a trivial estimate, we have
\[
\int_\fU |g_1(\alp)g_2(\alp)g_3(\alp)^{s-2}| \d \alp \ll P^{s-k}.
\]
In light of \eqref{Ldef} and \eqref{Kbounds}, we now have
\begin{equation} \label{TrivialMajor3HP}
\cI_\pm(\ft \cap \fN) \ll P^{s-k} \sum_{n=0}^\infty L(P)\cdot (T(P)+n)^{-2}  = o(P^{s-k}).
\end{equation}
Since 
\[
\fm \cup \ft \subseteq \fn \cup (\fm \cap \fN) \cup (\ft \cap \fN),
\]
the inequalities \eqref{ClassicalMinor3HP}, \eqref{MinorMajor3HP} and \eqref{TrivialMajor3HP} give \eqref{rest3altHP}, which in particular establishes \eqref{rest3HP}.

Next we consider
\begin{equation} \label{I1def}
\cI_\pm^{(1)} = \int_\fM g_1(\alp) \cdots g_s(\alp) e(-\alp \tau) K_\pm (\alp) \d \alp,
\end{equation}
following the recipe given in \cite[\S 6]{Woo2003}. Define
\[ I(\alp) = \int_0^P e(\alp x^k) \d x,\]
\[ \cI_\pm^{(2)} = \int_\fM I(\alp)^s e(-\alp \tau) K_\pm (\alp) \d \alp \]
and 
\[ \cI_\pm^{(3)} = \int_\bR I(\alp)^s e(-\alp \tau) K_\pm (\alp) \d \alp. \]
By \cite[Lemma 4.4]{Bak1986}, if $\alp \in \fM$ and $1 \le i \le s$ then
\[
g_i(\alp) = \int_0^P e(\alp(x-\mu_i)^k)\d x+ O(1) = I(\alp)+O(1).
\]
Recalling \eqref{Kbounds}, we now conclude that
\begin{equation} \label{d12}
\cI^{(1)}_\pm - \cI^{(2)}_\pm \ll 
P^{\xi-k}P^{s-1} = o(P^{s-k}),
\end{equation}
since $\xi < 1$. Moreover, it follows from \cite[Theorem 7.3]{Vau1997} that
\[ 
I(\alp) \ll |\alp|^{-1/k}, 
\]
so by \eqref{Kbounds} we have
\begin{equation} \label{d23} \cI^{(3)}_\pm - \cI^{(2)}_\pm 
\ll \int_{P^{\xi-k}}^\infty \alp^{-s/k} \d \alp = o(P^{s-k}).
\end{equation}

The final step is to provide asymptotics for
\[ 
\cI_\pm^{(3)} = \int_{(0,P]^s} \int_\bR  e(\alp(x_1^k + \ldots + x_s^k - \tau)) K_\pm(\alp) \d \alp \d \bx . 
\]
Changing variables with $u_i = P^{-k} x_i^k$ ($1 \le i \le s$) yields
\[ \cI_\pm^{(3)} = k^{-s}P^s \int_{(0,1]^s} (u_1 \cdots u_s)^{1/k-1} \Delta_\pm(\bu) \d \bu, \]
where 
\[ \Delta_\pm(\bu) =
\int_\bR e(\alp(P^k(u_1+\ldots+u_s)-\tau))K_\pm(\alp) \d \alp.
\]
Put
\[
\Delta^*(\bu) = \begin{cases}
1, &\text{if } |u_1+\ldots+u_s -1| < \eta P^{-k} \\
0,& \text{if } |u_1+\ldots+u_s -1| \ge \eta P^{-k}
\end{cases}
\]
and
\[ I^* = \int_{(0,1]^s} (u_1 \cdots u_s)^{1/k-1} \Delta^*(\bu) \d \bu. \]

In light of \eqref{Pdef} and the discussion following \eqref{Ubounds}, we see that
\[ \Delta_\pm(\bu) = \Delta^*(\bu), \]
except possibly when
\begin{equation} \label{except}
||u_1+\ldots+u_s - 1| - \eta P^{-k}| \le \eta P^{-k} L(P)^{-1},
\end{equation}
in which case we have $|\Delta_\pm(\bu) - \Delta^*(\bu)| \le 1$. If \eqref{except} is satisfied then there exists $j \in \{1,2,\ldots, s\}$ such that $u_j \gg 1$. For $j=1,2,\ldots,s$, let $T_j$ denote the set of $\bu \in (0,1]^s$ satisfying \eqref{except} and $u_j \gg 1$. Now
\[ \int_{T_j} (u_1 \cdots u_s)^{1/k-1} \d \bu \ll P^{-k} L(P)^{-1} \qquad (1 \le j \le s), \]
so
\[ \int_{(0,1]^s} (u_1 \cdots u_s)^{1/k-1}(\Delta_\pm(\bu) - \Delta^*(\bu)) \d \bu = o(P^{-k}). \]
Thus,
\begin{equation} \label{IndDif}
\cI_\pm^{(3)}-k^{-s}P^s  I^* = o(P^{s-k}).
\end{equation}

For $\bu \in (0,1]^s$, write $\bu' = (u_1, \ldots, u_{s-1})$ and $Y = 1- u_1 - \ldots - u_{s-1}$. For $S \subseteq (0,1]^s$, define
\[ I(S) = \int_S (u_1 \cdots u_s)^{1/k-1} \Delta^*(\bu) \d \bu. \]
Let $I_1 = I((0,1]^{s-1} \times (0,P^{-1}))$ and $I_2 = I((0,1]^{s-1} \times [P^{-1},1])$,
so that 
\begin{equation} \label{tweak}
I^* = I_1 + I_2.
\end{equation}
First we show that 
\begin{equation} \label{I1}
I_1 = o(P^{-k}).
\end{equation}
Since $\int_0^{P^{-1}} u_s^{1/k-1} \d u_s = o(1)$, it suffices for \eqref{I1} to show that
\begin{equation} \label{I1g}
\int_{(0,1]^{s-1}} (u_1 \cdots u_{s-1})^{1/k-1} \Delta^*(\bu) \d \bu' \ll P^{-k},
\end{equation}
uniformly for $u_s \in (0,P^{-1})$. Let $0 < u_s < P^{-1}$. If $\Delta^*(\bu) = 1$ then there exists $j \in \{1,2,\ldots, s-1\}$ such that $u_j \gg 1$. For $j=1,2,\ldots,s-1$, let $R_j$ denote the set of $\bu' \in (0,1]^{s-1}$ such that $\Delta^*(\bu) = 1$ and $u_j \gg 1$. Now
\[ \int_{R_j} (u_1 \cdots u_{s-1})^{1/k-1} \d \bu' \ll P^{-k} \qquad (1 \le j \le s-1), \]
which establishes \eqref{I1g} and in particular \eqref{I1}.

If $\Delta^*(\bu) = 1$ and $u_s \ge P^{-1}$ then $|u_s - Y| < \eta  P^{-k}$ so, by the mean value theorem,
\[ u_s^{1/k-1} - Y^{1/k-1} \ll (P^{-1})^{1/k-2}P^{-k} = P^{2-k-1/k} = o(1). \]
Combining this with the bound
\[ \int_{(0,1]^{s-1}} (u_1 \cdots u_{s-1})^{1/k-1} \int_{P^{-1}}^1 \Delta^*(\bu) \d u_s \d \bu' \ll P^{-k} \]
gives
\begin{equation} \label{I23}
I_2 - I_3 = o(P^{-k}),
\end{equation}
where 
\[ I_3 = \int_{(0,1]^{s-1}} (u_1 \cdots u_{s-1}Y)^{1/k-1}  \int_{P^{-1}}^1 \Delta^*(\bu) \d u_s \d \bu'. \]
Let $R$ be the set of $\bu' \in (0,1]^{s-1}$ such that $Y>0$. As $|u_s - Y| < \eta P^{-k}$ whenever $\Delta^*(\bu) \ne 0$, we have
\[
I_3 = \int_R (u_1 \cdots u_{s-1}Y)^{1/k-1}  \int_{P^{-1}}^1 \Delta^*(\bu) \d u_s \d \bu'.
\]

Next we show that
\begin{equation} \label{I34}
I_3 - I_4 = o(P^{-k}),
\end{equation}
where
\begin{align*} \notag I_4 &= \int_R (u_1 \cdots u_{s-1}Y)^{1/k-1}  \int_\bR\Delta^*(\bu) \d u_s \d \bu'  \\
&= 2 \eta P^{-k} \int_R (u_1 \cdots u_{s-1}Y)^{1/k-1} \d \bu'.
\end{align*}
Let $\bu \in R\times \bR$ be such that $\Delta^*(\bu) = 1$. Then $|u_s - Y| < \eta P^{-k}$, so $u_s > -\eta P^{-k}$. If $u_s < P^{-1}$ then $Y < 2P^{-1}$ and $u_j \gg 1$ for some $j \in \{1,2,\ldots,s-1\}$, so we can change variables from $u_j$ to $Y$ to show that the contribution from these $\bu$ is $o(P^{-k})$. Meanwhile, if $u_s > 1$ then $Y > 1 - \eta P^{-k}$ and $u_1, \ldots, u_{s-1} \ll P^{-k}$, so the contribution from these $\bu$ is also $o(P^{-k})$. We have established \eqref{I34}.

The computation
\begin{align*}
\int_R (u_1 \cdots u_{s-1}Y)^{1/k-1} \d \bu' 
&=  \underset{ u_1 + \ldots + u_{s-1} < 1 }{\int_0^1 \cdots \int_0^1} 
(u_1 \cdots u_{s-1}Y)^{1/k-1} \d \bu'
\\ &= \Gamma(1/k)^s \Gamma(s/k)^{-1}
\end{align*}
is standard (see \cite[p. 22]{Dav2005}). Therefore
\[
I_4 = 2 \eta P^{-k} \Gamma(1/k)^s \Gamma(s/k)^{-1}.
\]
In view of \eqref{tweak}, \eqref{I1}, \eqref{I23} and \eqref{I34}, we now have
\[ I^* = 2 \eta \Gamma(1/k)^s \Gamma(s/k)^{-1} P^{-k} + o(P^{-k}). \]
Combining this with \eqref{d12}, \eqref{d23} and \eqref{IndDif} yields
\begin{equation} \label{finally}
\cI_\pm^{(1)} = 2 \eta \Gamma(1+1/k)^s \Gamma(s/k)^{-1} P^{s-k} + o(P^{s-k}),
\end{equation}
where we recall \eqref{I1def}. Finally, \eqref{rest3HP} and \eqref{finally} give \eqref{goal3HP}, completing the proof of Theorem \ref{asymptotic}.

\section{Classical diminishing ranges}
\label{basic}

In this section we prove Theorem \ref{basicThm}. We shall restrict some variables to lie in diminishing ranges with the exponent $1-1/k$, exploiting the fact that we may obtain square root cancellation on the even moments associated to such ranges.

\begin{lemma} \label{lazy}
Let $k \ge 2$ and $t$ be positive integers, let $h_1,\ldots,h_t \in \bR[x]$ be degree $k$ polynomials, and let $\eta$ be a positive real number. Let $c > 1$, let $\lam = 1-1/k$ and, for $j=1,2,\ldots,t$, put $\lam_j = \lam^{j-1}$. Then the number $T$ of integral solutions to
\begin{equation} \label{lazyIneq}
\Bigl| \sum_{j\le t} (h_j(x_j) - h_j(y_j)) \Bigr| < \eta
\end{equation}
with $P^{\lam_j} < x_j, y_j \le cP^{\lam_j}$ $(1 \le j \le t)$ satisfies $T \ll_\eta P^{\lam_1+\ldots+\lam_t}$.
\end{lemma}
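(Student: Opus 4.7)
The plan is to prove, by induction on $t$, a strengthened statement: for every $C \in \bR$, the number of integer solutions to
\[
\Bigl| \sum_{j=1}^t (h_j(x_j) - h_j(y_j)) - C \Bigr| < \eta,
\]
with $P^{\lam_j} < x_j, y_j \le cP^{\lam_j}$ for each $j$, is $\ll_\eta P^{\lam_1 + \ldots + \lam_t}$, the implicit constant being uniform in $C$. The lemma is then the special case $C = 0$.

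For the base case $t = 1$, I would invoke the mean value theorem directly: since $h_1$ has degree $k$ with nonzero leading coefficient, $|h_1'(y)| \gg P^{k-1}$ for $y \in (P, cP]$, so fixing $x_1$ forces $y_1$ into an interval of length $\ll \eta P^{1-k}$, containing at most one integer for $P$ sufficiently large. Summing over $O(P)$ choices of $x_1$ gives the bound $\ll P$, uniformly in $C$.

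For the inductive step, I would first record the crude uniform bound
\[
\Bigl| \sum_{j=2}^t (h_j(x_j) - h_j(y_j)) \Bigr| \ll \sum_{j=2}^t P^{k\lam_j} \ll P^{k-1},
\]
using the calibrated identity $k\lam_{j+1} = (k-1)\lam_j$, so that the tail sum is dominated by its $j=2$ contribution. In consequence, $h_1(x_1) - h_1(y_1)$ must lie in an interval of length $O(P^{k-1})$ determined by $C$, and a second application of the mean value theorem to $h_1$ shows that each integer $x_1$ admits $O(1)$ valid $y_1$. Hence the number of admissible pairs $(x_1, y_1)$ is $\ll P$. For each such pair, the residual inequality has the same form with $t-1$ polynomials $h_2, \ldots, h_t$, parameter $Q = P^{1-1/k}$, and shift $C' = C - (h_1(x_1) - h_1(y_1))$: the ranges $(P^{\lam_{j+1}}, cP^{\lam_{j+1}}]$ for $j = 1, \ldots, t-1$ coincide with $(Q^{\lam^{j-1}}, cQ^{\lam^{j-1}}]$. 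The inductive hypothesis then yields $\ll_\eta Q^{1 + \lam + \ldots + \lam^{t-2}} = P^{\lam_2 + \ldots + \lam_t}$ completions uniformly in $C'$. Multiplying the two bounds gives the required $P^{\lam_1 + \ldots + \lam_t}$.

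The main subtle step is the shift-uniform formulation of the inductive hypothesis: a naive induction that freely chose $(x_1, y_1) \in (P, cP]^2$ would allow $P^2 = P^{2\lam_1}$ choices and so lose a factor of $P$, whereas the shift-uniform formulation lets the residual inequality restrict $(x_1, y_1)$ to $\ll P$ admissible pairs, which exactly balances with $P^{\lam_2 + \ldots + \lam_t}$ from the residual to yield $P^{\lam_1 + \ldots + \lam_t}$.
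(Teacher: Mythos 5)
Your proposal is correct, and it takes a genuinely different route from the paper's. The paper splits $T$ into the diagonal case $x_1 = y_1$ (bounded by $cP \cdot S$, where $S$ is the residual count at shift $0$) and the off-diagonal case $x_1 > y_1$; for the latter it first uses the mean value theorem to restrict $h = x_1 - y_1$ to a bounded range $0 < h \le C$, and then passes to Fourier analysis, writing $T' \ll \int_\bR G(\alp)|F(\alp)|^2 K(2\alp)\,\d\alp$ with $G$ a difference Weyl sum over the pairs $(x,h)$ and $F$ the product of the residual Weyl sums, bounding $G$ trivially by $P$ and invoking \eqref{OrthBounds} to recover $P \cdot S$. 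You instead strengthen the inductive hypothesis to hold uniformly over an arbitrary real shift $C$, use the mean value theorem once to show that only $O(P)$ pairs $(x_1,y_1)$ are admissible (since the residual sum is constrained to a window of length $O(P^{k-1})$), and then feed each residual shift $C' = C - (h_1(x_1) - h_1(y_1))$ directly into the shift-uniform hypothesis. Your version is arguably cleaner as a standalone argument since it avoids kernels and Weyl sums entirely; the cost is carrying the extra uniformity through the induction, which you correctly identify as the crucial step (a naive induction without it would lose a factor of $P$). The paper's Fourier route sidesteps the need for a shift-uniform hypothesis -- it only ever needs the residual count at shift $0$ with a doubled $\eta$ -- at the price of invoking machinery (the kernel $K$, the orthogonality bounds \eqref{OrthBounds}) that is in any case deployed heavily elsewhere in the paper. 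Both arguments correctly exploit the calibration $k\lam_2 = k-1$ so that the residual sum is $O(P^{k-1})$ and the derivative bound $|h_1'| \gg P^{k-1}$ confines $x_1 - y_1$ to $O(1)$ values.
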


\begin{proof} 
We proceed by induction on $t$. If $t=1$ then Lemma \ref{SecondMoment} yields $T \ll P$. Now let $t > 1$, and assume that the conclusion of Lemma \ref{lazy} holds with $t-1$ in place of $t$, for all large $P$ and all $\eta > 0$. We apply this inductive hypothesis to $h_2, \ldots, h_t$, with $P^\lam$ in place of $P$, and with $2 \eta$ in place of $\eta$. Since $\lam_j = \lam \lam_{j-1}$ ($2 \le j \le t$), this tells us the number $S$ of integer solutions $x_2,\ldots,x_t, y_2,\ldots,y_t$ to
\[ \Bigl| \sum_{j=2}^t (h_j(x_j) -h_j(y_j)) \Bigr| < 2 \eta \]
with $P^{\lambda_j} < x_j, y_j \le cP^{\lambda_j}$ ($2 \le j \le t$) satisfies 
\[ S \ll (P^\lam)^{\lambda_1+ \ldots + \lambda_{t-1}} = 
P^{\lambda_2+ \ldots + \lambda_t} . \]
Thus the number of solutions counted by $T$ with $x_1 = y_1$ is at most
\[
cPS \ll P^{\lam_1+\ldots+\lam_t}.
\]

It therefore remains to show that $T' \ll  P^{\lambda_1+ \ldots+\lambda_t}$, where $T'$ is the number of solutions counted by $T$ with $x_1 > y_1$. Put $y_1 = x$ and $x_1 = x+h$. Let $C$ be a large positive constant. The mean value theorem gives
\[ 
|h_1(x_1) - h_1(y_1)| \gg P^{k-1} |x_1 - y_1| = hP^{k-1}. 
\]
By combining this with the inequalities \eqref{lazyIneq} and
\[
\sum_{j=2}^t (h_j(x_j) -h_j(y_j)) \ll P^{k\lam_2} = P^{k-1},
\]
we deduce that $0 < h \le C$. 

Put
\[ 
f_j(\alp) = \sum_{P^{\lambda_j} < x \le cP^{\lambda_j}} e(\alp h_j(x)) 
\qquad (2 \le j \le t)
\]
and 
\[ F(\alp) = f_2(\alp) \cdots f_t(\alp).\]
For real numbers $\alp$, define
\[ G(\alp) = \sum_{0 < h \le C} \sum_{P < x \le cP} e(\alp(h_1(x+h) -h_1(x))). \]
In light of \eqref{OrthBounds}, a trivial bound on $G(\alp)$ gives
\begin{align*}
T' &\ll \int_\bR G(\alp) |F(\alp)|^2 K(2\alp) \d \alp \ll P  \int_\bR |F(\alp)|^2 K(2\alp) \d \alp  \\
& \ll PS \ll  P^{\lam_1+\ldots+\lam_t},
\end{align*}
completing the proof.
\end{proof}

We are now ready to prove Theorem \ref{basicThm}. Let $s \ge 2t+E$, and let $\eta > 0$. With $\lam = 1-1/k$, put
\begin{equation} \label{lamDefs}
\lam_j = \lam^{j-1} \quad (1 \le j \le t), \qquad \Delta = \lam_1 + \ldots + \lam_t.
\end{equation}
Let $0 < \xi < \lambda_t$, and let $\gam$ be a small positive real number. 

\subsection{A first upper bound for $s_1(k)$} \label{s1bitBasic}

In this subsection we prove \eqref{basicBounds} for $\iota =1$. Let $\tau$ be a large positive real number, and define $P$ by $\tau = (E+2.1)P^k$. We need to show that there exist integers $x_1 > \mu_1, \ldots, x_s > \mu_s$ satisfying \eqref{main}. We may plainly assume \eqref{wlog} and, by fixing the variables $x_{2t+E+1}, \ldots, x_s$ if necessary, that
\begin{equation} \label{sdef} 
s = 2t + E.
\end{equation} 

Let
\[
g_i(\alp) = \sum_{P^{\lambda_j} < x \le 2P^{\lambda_j}} e(\alp (x- \mu_i)^k) \qquad (1 \le i \le s),
\]
where 
\begin{equation} \label{jdef}
j = j(i) =  \begin{cases}
1, & 1 \le i \le  E \\
\lfloor (i - E+1)/2 \rfloor, & E < i \le s.
\end{cases} 
\end{equation}
By \eqref{OrthBounds}, it suffices to prove that
\[ \int_\bR g_1(\alp) \cdots g_s(\alp) e(-\alp \tau) K(\alp) \d \alp \gg P^{E+2\Delta-k}. \]
Recall that $\mu_1 \notin \bQ$. With $T(P)$ as in Lemma \ref{Freeman}, applied to the polynomials $(x-\mu_1)^k$ and $(x- \mu_2)^k$, we define our Davenport-Heilbronn arcs by \eqref{dh1}, \eqref{dh2} and \eqref{dh3}.

\begin{lemma} \label{s1basicMajor}
We have 
\[ \int_\fM g_1(\alp) \cdots g_s(\alp) e(-\alp \tau) K(\alp) \d \alp \gg P^{E+2\Delta-k}. \]
\end{lemma}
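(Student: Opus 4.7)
The plan is to approximate each Weyl sum $g_i(\alp)$ on $\fM$ by an associated integral $v_{j(i)}(\alp)$, reduce the main term to a classical singular integral via the substitution $\bet = \alp P^k$, and verify its positivity using the specific normalization $\tau = (E + 2.1) P^k$.

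First, I would apply \cite[Lemma 4.4]{Bak1986}: since $|k \alp (y - \mu_i)^{k-1}| \ll P^{\xi - 1} = o(1)$ for $\alp \in \fM$ and $y \in (P^{\lam_j}, 2P^{\lam_j}]$, one obtains
\[
g_i(\alp) = v_{j(i)}(\alp) + O(1), \qquad v_j(\alp) = \int_{P^{\lam_j}}^{2P^{\lam_j}} e(\alp y^k)\d y,
\]
the $O(1)$ also absorbing the shift $\mu_i \in [0, 1)$ via an end-point comparison. Using $|v_j(\alp)| \le P^{\lam_j}$ and the identity
\[
\sum_{i \le s} \lam_{j(i)} = (E + 2) + 2(\lam_2 + \ldots + \lam_t) = E + 2\Delta,
\]
every cross term from expanding $\prod_i(v_{j(i)}(\alp) + O(1))$ contributes at most $O(P^{E + 2\Delta - \lam_t})$. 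Combined with $K(\alp) \le \eta$ and $|\fM| \ll P^{\xi - k}$, the total error from this approximation is $o(P^{E + 2\Delta - k})$ by the assumption $\xi < \lam_t$.

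Next, I would substitute $\bet = \alp P^k$ and $y = P^{\lam_j} u$, so that $v_j(\bet P^{-k}) = P^{\lam_j} I_k(\bet P^{k(\lam_j - 1)})$ with $I_k(\gam) = \int_1^2 e(\gam u^k)\d u$. For $j \ge 2$ one has $k(1 - \lam_j) \ge 1$; together with $\xi < 1$ this yields $I_k(\bet P^{k(\lam_j - 1)}) = 1 + o(1)$ uniformly on $|\bet| \le P^\xi$. Replacing $K(\alp) = \eta(1 + O((\alp\eta)^2))$ by $\eta$ (an error again $o(P^{E + 2\Delta - k})$) and using $\tau P^{-k} = E + 2.1$, the main term reduces to
\[
\int_\fM g_1(\alp) \cdots g_s(\alp) e(-\alp \tau) K(\alp) \d \alp = \eta P^{E + 2\Delta - k}\bigl(\cJ_P + o(1)\bigr),
\]
where $\cJ_P = \int_{|\bet| \le P^\xi} I_k(\bet)^{E + 2}e(-\bet(E + 2.1))\d\bet$.

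Finally, I would extend $\cJ_P$ to $\cJ = \int_\bR I_k(\bet)^{E + 2} e(-\bet(E + 2.1))\d\bet$. The van der Corput bound $I_k(\bet) \ll \min(1, |\bet|^{-1/k})$ and the inequality $(E + 2)/k > 1$ (which follows from $E \ge 2k - 1$) together give absolute convergence and $\cJ_P = \cJ + o(1)$. Expanding $I_k(\bet)^{E + 2} = \int_{[1, 2]^{E + 2}} e\bigl(\bet \textstyle\sum_i u_i^k\bigr)\d\bu$ and formally exchanging the order of integration (justified via a smoothing/truncation argument) identifies $\cJ$ with a positive multiple of the surface measure of
\[
\bigl\{\bu \in [1, 2]^{E + 2} : \textstyle\sum_i u_i^k = E + 2.1\bigr\}.
\]
Since $E + 2 < E + 2.1 < 2^k(E + 2)$, the intermediate value theorem produces a point $\bu \in (1, 2)^{E + 2}$ on this hypersurface, so $\cJ > 0$ and the lemma follows. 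The main obstacle is precisely the positivity of $\cJ$: it depends crucially on the normalization $\tau = (E + 2.1) P^k$, as any choice of $\tau / P^k$ outside $(E + 2, 2^k(E + 2))$ would force the level set to miss the interior of the cube and cause $\cJ$ to vanish.
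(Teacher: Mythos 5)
Your argument is correct and reaches the same conclusion, but it handles the main term by a genuinely different route. Both proofs open identically: apply \cite[Lemma~4.4]{Bak1986} on $\fM$ to replace each $g_i$ by the associated oscillatory integral, incurring error $o(P^{E+2\Delta-k})$ since $\xi<\lam_t$. You then strip the integrand down---replace $K(\alp)$ by $\eta$, replace each diminishing-range factor by $P^{\lam_j}(1+o(1))$, normalize via $\bet=\alp P^k$, and complete to the fixed convergent singular integral $\cJ=\int_\bR I_k(\bet)^{E+2}e(-\bet(E+2.1))\,\d\bet$---and establish $\cJ>0$ via Fourier inversion and the observation that the level set $\{\sum u_i^k=E+2.1\}$ meets the open cube $(1,2)^{E+2}$. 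The paper instead keeps the kernel and all $s$ factors intact and exploits the nonnegativity of $\int_\bR e(\alp t)K(\alp)\,\d\alp=\max(0,1-|t/\eta|)$ from \eqref{orth1}: it unwinds $\cI^{(3)}$ as an integral of a manifestly nonnegative function over the box $\cR_0$, restricts to a sub-box $\cV$ on which $y_2,\ldots,y_{E+2}\in(P^k,(1+\omega)P^k]$ and $|y_1+\ldots+y_s-\tau|<\eta/2$, and concludes by a measure estimate. What the paper's route buys is that positivity is immediate, with no Fourier inversion or smoothing argument required; the role of your intermediate-value step is played by the explicit check that $\tau-y_2-\ldots-y_s\in(P^k+\eta,1.1P^k)\subset(P^k,2^kP^k)$ on $\cV$. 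Your route is closer to the asymptotic-formula analysis in \S\ref{AF} and would also work, but to close it fully you should justify the exchange of integrals and verify that the Lebesgue density of $\sum_{i\le E+2}u_i^k$ on $[1,2]^{E+2}$ is continuous and strictly positive at $E+2.1$ (not merely that the level set is nonempty). Your closing aside about the necessity of the normalization $\tau=(E+2.1)P^k$ is a correct reading of the setup.
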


\begin{proof}
For $X > 0$ and $\alp \in \bR$ write
\[
I(\alp, X) = \int_X^{2X} e(\alp x^k) \d x.
\]
With $j = j(i)$ as in \eqref{jdef}, let
\[ 
I_i(\alp) = I(\alp, P^{\lam_j}) \qquad (1 \le i \le s).
\]
Define 
\begin{align*} 
\cI^{(1)} &= \int_\fM g_1(\alp)\cdots g_s(\alp)e(-\alp\tau) K(\alp) \d \alp, \\
\cI^{(2)} &= \int_\fM I_1(\alp)\cdots I_s(\alp)e(-\alp\tau) K(\alp) \d \alp 
\end{align*}
and
\[ \cI^{(3)} = \int_\bR I_1(\alp)\cdots I_s(\alp)e(-\alp\tau) K(\alp) \d \alp. \]

Given $\mu \in \bR$ and $X \in (0,P]$, it follows from \cite[Lemma 4.4]{Bak1986} that if $\alp \in \fM$ then
\begin{align*}
\sum_{X < x \le 2X} e(\alp(x-\mu)^k) &= \int_X^{2X} e(\alp(x-\mu)^k)\d x+ O(1) \\
&= I(\alp,X)+O(1).
\end{align*}
Recalling \eqref{Kbound}, we now conclude that
\begin{equation} \label{d1}
\cI^{(1)} - \cI^{(2)} \ll 
P^{\xi-k}P^{E+2\Delta-\lam_t} = o(P^{E+2\Delta-k}),
\end{equation}
since $\xi < \lam_t$. By \cite[Theorem 7.3]{Vau1997} we have
\[ 
I(\alp,P) \ll |\alp|^{-1/k}, 
\]
and now \eqref{Kbound} and a trivial estimate give
\begin{equation} \label{d2} \cI^{(3)} - \cI^{(2)} \ll  P^{2\Delta-2} \int_{P^{\xi-k}}^\infty \alp^{-(E+2)/k} \d \alp
= o(P^{E+2\Delta-k}),
\end{equation}
as $E \ge 2k - 1 > k-2$.

With $j = j(i)$ as in \eqref{jdef}, write $\cR_0 = \prod_{i \le s} (P^{\lam_j}, 2P^{\lam_j}]$, and consider
\[ 
\cI^{(3)} = \int_{\cR_0} \int_\bR e(\alp(x_1^k + \ldots + x_s^k-\tau)) K(\alp) \d \alp \d \bx.
\]
By \eqref{orth1}, changing variables yields
\begin{equation} \label{I3first}
\cI^{(3)} \gg \int_\cR (\eta - |y_1 + \ldots + y_s - \tau|) \cdot (y_1 \cdots y_s)^{1/k-1} \d \by,
\end{equation}
where $\cR$ is the set of $\by \in \bR^s$ such that
\begin{equation} \label{ybounds}
P^{k\lam_{j(i)}} < y_i \le 2^k P^{k\lam_{j(i)}} \qquad (1 \le i \le s) 
\end{equation}
and
\[ 
|y_1 + \ldots + y_s - \tau| < \eta. 
\]
Let $\omega$ be a small positive real number. Let $\cV$ denote the set of $\by \in \cR$ such that
\begin{equation} \label{restrict}
P^k < y_2, y_3, \ldots, y_{E+2} \le (1+\omega)P^k
\end{equation}
and
\[ 
|y_1 + \ldots + y_s - \tau| < \eta/2. 
\]
By positivity of the integrand in \eqref{I3first}, we have
\[
\cI^{(3)} \gg P^{k(E+2\Delta)(1/k-1)} \cdot \meas(\cV) = P^{(1-k)(E+2\Delta)} \cdot \meas(\cV).
\]

As $\tau = (E+2.1) P^k$ and $\omega$ is small, we have
\[ 
P^k +\eta < \tau - y_2 - \ldots - y_s < 1.1P^k
\]
whenever the inequalities \eqref{ybounds} and \eqref{restrict} are satisfied. Hence
\[
\meas(\cV) \gg P^{k(E-1+2\Delta)},
\]
so
\begin{equation} \label{I3}
\cI^{(3)} \gg  P^{(1-k)(E+2\Delta)} P^{k(E-1+2\Delta)} = P^{E+2\Delta-k}.
\end{equation}
The bounds \eqref{d1}, \eqref{d2} and \eqref{I3} yield the desired result 
\[ \cI^{(1)} \gg P^{E+2\Delta-k}. \]
\end{proof}

By Lemma \ref{s1basicMajor}, H\"older's inequality and symmetry, it remains to show that 
\begin{equation} \label{basicGoal}
\int_{\fm \cup \ft} |g_1(\alp) g_2(\alp) g_3(\alp)^E \prod_{2 \le j \le t} g_{E+2j-1}(\alp)^2| K(\alp) \d \alp
= o(P^{E+2\Delta-k}).
\end{equation}
By inspecting \eqref{jdef}, we see that 
\[
g_{E+2j-1}(\alp) = \sum_{P^{\lam_j} < x \le 2P^{\lam_j}}e(\alp(x-\mu_{E+2j-1})^k).
\]
Fix $i \in \{1,2,3\}$, let
\[ \fN = \{ \alp \in \bR: |g_i(\alp)| > P^{1-\sigma(k)+\eps} \}, \]
put $\fn = \bR \setminus \fN$, and let $\fU$ be the intersection of $\fN$ with a unit interval. For subsets $U \subseteq \bR$, write
\[ I(U) = \int_U |g_i(\alp)^{E+2-\gam} \prod_{2 \le j \le t} g_{E+2j-1}(\alp)^2| K(\alp) \d \alp.\]
In view of \eqref{OrthBounds} and \eqref{lamDefs}, Lemma \ref{lazy} implies that
\[ \int_\bR |g_i(\alp)^2 \prod_{2 \le j \le t} g_{E+2j-1}(\alp)^2| K(\alp) \d \alp \ll P^\Delta. \]
A straightforward computation gives $\Delta = k(1-\lam^t)$, so 
\[ E > k \lam^t \sigma(k)^{-1} =  (k-\Delta) \sigma(k)^{-1}. \]
As $\gam$ and $\eps$ are small, we must therefore have 
\[ (E-\gam)\sig(k) - \eps > k - \Delta. \]
Hence
\begin{align} \notag
I(\fn) 
&\ll  (\sup_{\alp \in \fn} |g_i(\alp)|)^{E - \gam} P^\Delta \le P^{(E-\gam)(1-\sigma(k))+\Delta + \eps} \\
\label{basicClassicalMinor} & = o(P^{E+2\Delta-k-\gam}).
\end{align}

Since $E+2 \ge 2k+1$, Corollary \ref{ClassicalMajorGeneral} and a trivial estimate yield
\[
\int_\fU |g_i(\alp)^{E+2-\gam} \prod_{2 \le j \le t} g_{E+2j-1}(\alp)^2| \d \alp 
\ll P^{E+2\Delta-\gam-k}
\]
which, recalling \eqref{Kbound}, gives
\begin{equation} \label{basicClassicalMajor}
I(\fN) \ll P^{E+2\Delta-k-\gam}
\end{equation}
and
\begin{align} \label{basicTrivialMajor}
I(\ft \cap \fN) \ll P^{E+2\Delta-k-\gam} \sum_{n=0}^\infty (T(P)+n)^{-2} 
= o(P^{E+2\Delta-k-\gam}).
\end{align}
The inequalities \eqref{basicClassicalMinor} and \eqref{basicTrivialMajor}, together with a trivial estimate and H\"older's inequality, yield
\begin{equation} \label{basicTrivial}
\int_\ft |g_1(\alp)g_2(\alp) g_3(\alp)^E \prod_{2 \le j \le t} g_{E+2j-1}(\alp)^2| K(\alp) \d \alp = o(P^{E+2\Delta-k}).
\end{equation}

Combining \eqref{basicClassicalMinor} with \eqref{basicClassicalMajor} gives
\[
\int_\bR |g_i(\alp)^{E+2-\gam} \prod_{2 \le j \le t} g_{E+2j-1}(\alp)^2| K(\alp) \d \alp \ll P^{E+2\Delta-k-\gam}
\]
for $i=1,2,3$ which, by H\"older's inequality and \eqref{Freeman0}, yields
\[
\int_\fm |g_1(\alp)g_2(\alp) g_3(\alp)^E  \prod_{2 \le j \le t} g_{E+2j-1}(\alp)^2| K(\alp) \d \alp 
= o(P^{E+2\Delta-k}).
\]
This and \eqref{basicTrivial} give \eqref{basicGoal}, completing the proof of \eqref{basicBounds} for $\iota=1$.

\subsection{An upper bound for $s_0(k)$}

In this subsection we prove \eqref{basicBounds} for \mbox{$\iota =0$.} Let $h_1, \ldots, h_s \in \bR[x]$ be degree $k$ polynomials satisfying the irrationality condition, and put $H(\bx) = \sum_{i \le s} h_i(x_i)$. Let $\tau \in \bR$, and assume that $H(\bx)$ is indefinite. We need to show that there exists $\bx \in \bZ^s$ satisfying \eqref{main2}. Without loss of generality $h_1$ and $h_2$ satisfy the irrationality condition. We may evidently assume that $\tau = 0$, and that $h_1$ is monic. Let $a_1=1, a_2, \ldots, a_s$ be the leading coefficients of $h_1, \ldots, h_s$ respectively. As $H(\bx)$ is indefinite, we may assume without loss that $a_J < 0$ for some $J \in\{ 2, 3 \}$. By fixing the variables $x_{2t+E+1}, \ldots, x_s$ if necessary, we may plainly assume \eqref{sdef}.

Define $r \in \bR$ by
\begin{equation} \label{rdef}
- a_J r^k = 3 + \sum_{i \le s} |a_i|,
\end{equation}
and note that $r > 1$. Let $\omega$ be a small positive constant, and let $c$ be a large positive constant. Let $P$ be a large positive real number. With $j = j(i)$ as in \eqref{jdef}, let
\[
g_i(\alp) = \sum_{P^{\lambda_j} < x \le cP^{\lambda_j}} e(\alp h_i(x)) \qquad (1 \le i \le s),
\]
where we recall \eqref{lamDefs}. By \eqref{OrthBounds}, it suffices to prove that
\[ 
\int_\bR g_1(\alp) \cdots g_s(\alp) K(\alp) \d \alp \gg P^{E+2\Delta-k}. 
\]
With $T(P)$ as in Lemma \ref{Freeman}, we define our Davenport-Heilbronn arcs by \eqref{dh1}, \eqref{dh2} and \eqref{dh3}.

\begin{lemma} \label{s0basicMajor} We have
\[ \int_\fM g_1(\alp) \cdots g_s(\alp) K(\alp) \d \alp \gg P^{E+2\Delta-k}. \]
\end{lemma}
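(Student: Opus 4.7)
The argument runs in close parallel to that of Lemma \ref{s1basicMajor}, with the main novelty being the choice of the box in the final step, which exploits the definition of $r$ in \eqref{rdef}. Set
\[ I_i(\alp) = \int_{P^{\lam_{j(i)}}}^{cP^{\lam_{j(i)}}} e(\alp h_i(x)) \d x, \]
and define $\cI^{(1)}, \cI^{(2)}, \cI^{(3)}$ by analogy with Lemma \ref{s1basicMajor}, using $h_i(x)$ in place of $(x-\mu_i)^k$ and recalling that $\tau = 0$. The approximation $g_i(\alp) = I_i(\alp) + O(1)$ on $\fM$ follows from \cite[Lemma 4.4]{Bak1986} and gives $\cI^{(1)} - \cI^{(2)} = o(P^{E+2\Delta-k})$. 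The bound $I_i(\alp) \ll_{h_i} |\alp|^{-1/k}$ from \cite[Theorem 7.3]{Vau1997}, combined with the inequality $E \ge 2k-1$ (which makes the resulting tail integral summable), yields $\cI^{(2)} - \cI^{(3)} = o(P^{E+2\Delta-k})$.

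It remains to show $\cI^{(3)} \gg P^{E+2\Delta-k}$. Unfolding the kernel via \eqref{orth1},
\[
\cI^{(3)} \ge \int_{\cR_0} \max\Bigl(0,\ 1 - \eta^{-1}\Bigl|\sum_{i=1}^s h_i(x_i)\Bigr|\Bigr) \d\bx,
\]
where $\cR_0 = \prod_i (P^{\lam_{j(i)}}, cP^{\lam_{j(i)}}]$. The plan is to construct a subset $\cV \subseteq \cR_0$ of measure $\gg P^{E+2\Delta-k}$ on which $|\sum_i h_i(x_i)| < \eta/2$, so that $\cI^{(3)} \ge \tfrac12 \meas(\cV)$.

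Define $\cV$ by restricting $x_J$ to $(rP, (r+\omega)P]$; restricting $x_i \in (P, (1+\omega)P]$ for each $i \in S' := \{2,\ldots,E+2\}\setminus\{J\}$; allowing $x_i$ to range over $(P^{\lam_{j(i)}}, cP^{\lam_{j(i)}}]$ for $i > E+2$; and letting $x_1$ vary in $(P, cP]$ subject to $|\sum_i h_i(x_i)| < \eta/2$. Using $h_i(x) = a_i x^k + O(x^{k-1})$ together with \eqref{rdef}, a direct computation gives
\[ -\sum_{i\ne 1} h_i(x_i) = C_0 P^k + O(\omega P^k) + O(P^{k-1}), \]
where $C_0 := 3 + \sum_i |a_i| - \sum_{i\in S'} a_i$. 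Here $h_J(x_J) \approx a_J r^k P^k = -(3+\sum_i|a_i|)P^k$ by the choice of $r$; the variables in $S'$ contribute approximately $\sum_{i\in S'} a_i P^k$; and the higher-$j$ variables (with $\lam_{j(i)} \le 1-1/k$) contribute only $O(P^{k-1})$.

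By the triangle inequality $\sum_i |a_i| \ge \sum_{i\in S'} a_i$, one has $C_0 \ge 3 > 1$, while $C_0 \le 3 + 2\sum_i |a_i|$ is bounded in terms of the $a_i$'s alone. Taking $c$ large enough (depending on $\sum_i |a_i|$) ensures that the equation $h_1(x_1) = -\sum_{i\ne 1} h_i(x_i)$ has a solution $x_1 \in (P, cP]$; since $h_1$ is monic with $|h_1'(x_1)| \asymp P^{k-1}$ there, the set of $x_1$ satisfying $|\sum_i h_i(x_i)| < \eta/2$ is an interval of length $\gg \eta P^{1-k}$. Multiplying measures---$\omega P$ from $x_J$, $(\omega P)^E$ from the variables in $S'$, $\gg P^{2(\Delta-1)}$ from the $2(t-1)$ higher-$j$ variables, and $\gg \eta P^{1-k}$ from $x_1$---yields $\meas(\cV) \gg P^{E+2\Delta-k}$. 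The only delicate point is the uniform lower bound $C_0 > 1$, which holds independently of the particular configuration of $a_i$'s; this is precisely what the specific choice of $r$ in \eqref{rdef} is engineered to produce.
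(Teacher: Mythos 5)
Your proposal is correct and follows essentially the same route as the paper: the same truncation $\cI^{(1)}\to\cI^{(2)}\to\cI^{(3)}$, the same choice of box with $x_J\approx rP$ and $x_i\approx P$ for the remaining low-$j$ variables, and the same use of \eqref{rdef} to pin the target value $-\sum_{i\ne1}h_i(x_i)$ inside the range of $h_1$ on $(P,cP]$. The only cosmetic difference is that you estimate the centre value $C_0P^k$ directly and then appeal to $\sum_i|a_i|\ge\sum_{i\in S'}a_i$, whereas the paper sets up explicit bounds $\cL P^k<\Lambda(\bx')<\cU P^k$; both reductions are the same in substance.
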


\begin{proof} Let
\[ I_i(\alp) = \int_{P^{\lam_j}}^{cP^{\lam_j}} e(\alp h_i(x)) \d x \qquad (1 \le i \le s),\]
with $j = j(i)$ as in \eqref{jdef}. Define 
\begin{align*} 
\cI^{(1)} &= \int_\fM g_1(\alp)\cdots g_s(\alp) K(\alp) \d \alp, \\
\cI^{(2)} &= \int_\fM I_1(\alp)\cdots I_s(\alp) K(\alp) \d \alp 
\end{align*}
and
\[ \cI^{(3)} = \int_\bR I_1(\alp)\cdots I_s(\alp) K(\alp) \d \alp. \]
Mimicking the proofs of \eqref{d1} and \eqref{d2}, we deduce that
\begin{equation} \label{s0d12}
\cI^{(1)} - \cI^{(2)} = o(P^{E+2\Delta-k}) 
\end{equation}
and 
\begin{equation} \label{s0d23}
\cI^{(2)} - \cI^{(3)} = o(P^{E+2\Delta-k}). 
\end{equation}

With $j = j(i)$ as in \eqref{jdef}, write $R = \prod_{i \le s} (P^{\lam_j}, cP^{\lam_j}]$, and consider
\[ 
\cI^{(3)} = \int_R \int_\bR e(\alp H(\bx)) K(\alp) \d \alp \d \bx.
\]
Let $I = \{2, 3, \ldots, E+2 \} \setminus \{ J \}$. As $r > 1$ and $\omega$ is small, we must also have $r-\omega > 1$. Let $X$ denote the set of $(x_2, \ldots, x_s) \in \bR^{s-1}$ such that
\begin{align*}
(r- \omega)P &\le x_J \le (r+\omega)P, \\
P &\le x_i \le (1+\omega)P \qquad (i \in I) 
\end{align*}
and 
\[
P^{\lam_{j(i)}} < x_i \le cP^{\lam_{j(i)}} \qquad (E+2 < i \le s).
\]
By \eqref{orth1}, we have
\begin{equation} \label{spec}
\cI^{(3)} \gg \int_R \max(0, \eta - |H(\bx)|) \d \bx \gg V,
\end{equation}
where $V$ is the measure of the set of $\bx \in [P,cP] \times X$ such that $|H(\bx)| \le \eta/2$. In view of \eqref{s0d12}, \eqref{s0d23} and \eqref{spec}, it remains to show that
\begin{equation} \label{unif}
 \meas \{x_1 \in [P,cP] : |H(\bx)| \le \eta/2 \} \gg P^{1-k}, 
\end{equation}
uniformly for $(x_2, \ldots, x_s) \in X$.

Let $\bx' = (x_2, \ldots, x_s) \in X$, and put
\[ 
\Lambda (\bx') =  - \eta/2 - \sum_{i =2}^s h_i(x_i). 
\]
Then 
\[ 
\cL P^k < \Lambda (\bx') < \cU P^k,
\]
where
\[
\cL = -a_J(r-2\omega)^k - (1+2\omega)^k \sum_{i \in I} |a_i|
\]
and
\[
\cU = -a_J(r+2\omega)^k + (1+2\omega)^k \sum_{i \in I} |a_i|.
\]
Since $\omega$ is small, it follows from \eqref{rdef} that $\cL >2$. As $c$ is large, we also have $\cU < c^k - 1$. Now
\begin{equation} \label{Lambound}
2 P^k < \Lambda (\bx') < (c^k - 1) P^k.
\end{equation}

The polynomial $h_1$ is strictly increasing when its argument is sufficiently large. As $\Lambda(\bx')$ is large and positive, there exist unique positive real numbers $m$ and $M$ such that $h_1(m) = \Lambda(\bx')$ and $h_1(M) = \Lambda(\bx') + \eta$. Recalling that $h_1$ is monic of degree $k$, we now deduce from \eqref{Lambound} that
\[ 
P < m < M < cP.
\]
Any $x_1 \in [m,M]$ satisfies $|H(\bx)| \le \eta/2$, and the mean value theorem gives
\[ (M-m)^{-1} \ll (M-m)^{-1} (h_1(M) - h_1(m)) \ll M^{k-1} \ll P^{k-1}. \]
Thus we have \eqref{unif}, completing the proof of Lemma \ref{s0basicMajor}.
\end{proof}

The remainder of the proof is identical to that of \eqref{basicBounds} for $\iota=1$, which is given in \S \ref{s1bitBasic}. Thus we have established \eqref{basicBounds} for $\iota=0$, thereby completing the proof of Theorem \ref{basicThm}.

\section{Slowly diminishing ranges}
\label{mainProof}

In this section we deduce the bounds in Table \ref{table1} by proving a more general result. Recalling \eqref{sigmaDef}, we henceforth write 
\begin{equation} \label{henceforth}
v = \sigma(k-1), \qquad \lam = 1 - (1-v)/k.
\end{equation}
We shall use $\lam$ as our diminishing ranges exponent. Using an iterative procedure, we achieve square root cancellation on low even moments associated to these diminishing ranges. The $t$th step fails by a power of $E^*$ to deliver square root cancellation on the $2t$th moment; see Lemma \ref{moment}. Definition \ref{CrazyDef} coins an adjective for when $E^*$ vanishes at each of the first $n$ steps.
\begin{defn} \label{CrazyDef}
Let $k \ge 2$. An integer $n$ is \emph{$k$-good} if $n=0$, or if $n > 0$ and the following holds for $t=1,2,\ldots,n$. With \eqref{lamDefs}, let
\begin{equation} \label{e1def}
 e_1 = v + 1/2 + (\Delta-1)(2k-1-2\sig(k))/(2k-2)- \Delta
\end{equation}
and, for $2 \le \ell \le t$, let
\begin{equation} \label{eldef}
e_\ell = v + \Delta-2-2\sig(k)\cdot (\lam_{\ell+1} + \ldots + \lam_t)+(k-2)(v-1) + M_\ell,
\end{equation}
where $M_\ell = \max(0, 1+2(1- \ell)/k) \cdot \lam_\ell k\sig(k)$. Put
\begin{equation}\label{EstarDef}
E^* = \max(0, e_1, \ldots, e_t).
\end{equation}
Then $E^* = 0$.
\end{defn}

Note that if $n \in \bN$ is $k$-good then so is $n-1$. We shall prove the following.

\begin{thm} \label{mainThm}
Let $k \ge 4$. If $k=4$, let $t = 4$ and $E = 8$. Otherwise, let $t$ be a $k$-good integer such that
\begin{equation} \label{cond1}
\lam^{t-1} > \frac{v}{1-\sig(k)}
\end{equation}
and
\begin{equation} \label{cond2}
2t+E \ge 4k,
\end{equation}
where
\begin{equation} \label{Edef}
E = 1 + \max(k-1, \lfloor \sig(k)^{-1} k (\lam^t -v) /(1-v) \rfloor).
\end{equation}
Then
\[
s_1(k) \le 2t + E.
\]
\end{thm}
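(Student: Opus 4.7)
The plan is to carry out the Davenport–Heilbronn recipe of Section \ref{basic} with the more ambitious diminishing ranges exponent $\lam$ from \eqref{henceforth} in place of $1-1/k$. By fixing $x_{2t+E+1},\ldots,x_s$ we reduce to the case $s = 2t+E$. Define $P$ by $\tau = (E+2.1)P^k$, set
\[
g_i(\alp) = \sum_{P^{\lam_{j(i)}} < x \le cP^{\lam_{j(i)}}} e(\alp(x-\mu_i)^k)
\]
for a large constant $c$, with $j(i)$ as in \eqref{jdef}, and take Davenport–Heilbronn arcs $\fM$, $\fm$, $\ft$ from \eqref{dh1}–\eqref{dh3} with $T(P)$ supplied by Lemma \ref{Freeman} applied to $(x-\mu_1)^k$ and $(x-\mu_2)^k$. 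By \eqref{OrthBounds} it suffices to show that the integral of $g_1(\alp)\cdots g_s(\alp) e(-\alp\tau) K(\alp)$ over $\fM$ is $\gg P^{E+2\Delta-k}$ and that the corresponding integral over $\fm\cup\ft$ is $o(P^{E+2\Delta-k})$.

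The major arc bound is essentially a transcription of Lemma \ref{s1basicMajor}: approximate each $g_i$ by the corresponding integral via \cite[Lemma 4.4]{Bak1986}, extend the range of $\alp$ to $\bR$ using the decay estimate from \cite[Theorem 7.3]{Vau1997}, and change variables $y_i = x_i^k$. The condition \eqref{cond2} together with the lower bound $E \ge k-1$ built into \eqref{Edef} gives enough room to pin the $E+1$ full-sized variables into a window $(P^k,(1+\omega)P^k]$, forcing the remaining large variable to land in the target interval; positivity of the integrand against $K$ then delivers the lower bound.

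The principal new ingredient is an iterative moment estimate generalising Lemma \ref{lazy} to the larger exponent $\lam$. Writing $f_j(\alp)$ for the Weyl sum over $(P^{\lam_j},cP^{\lam_j}]$, the target is a bound of shape
\[
\int_\bR \Bigl| \prod_{j\le t} f_j(\alp) \Bigr|^2 K(\alp) \d \alp \ll P^{\Delta + E^* + \eps},
\]
with $E^*$ the quantity of \eqref{EstarDef}. I would prove this by induction on $t$: the $x_1=y_1$ diagonal reduces to the inductive hypothesis on $j \ge 2$, while the off-diagonal part, which cannot now be controlled by Lemma \ref{SecondMoment} because $\lam > 1-1/k$, is handled at each stage $\ell$ by a classical major/minor arc dissection via Lemma \ref{ClassicalMajorIngredient}. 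The quantities $e_\ell$ in \eqref{eldef} are calibrated to be exactly the losses incurred on classical arcs at stage $\ell$, where the minor arc contribution picks up a $\sig(k)$ saving per variable and the major arc contribution is estimated through Corollary \ref{ClassicalMajorGeneral}. The hypothesis $E^* = 0$ from Definition \ref{CrazyDef} is thus exactly what is needed to preserve the clean $P^\Delta$ output.

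With this moment estimate available, the analysis of $\fm\cup\ft$ runs as in \S\ref{s1bitBasic}. On classical minor arcs $\{\alp : |g_i(\alp)| \le P^{1-\sig(k)+\eps}\}$, pair the pointwise bound on $g_i$ with the moment estimate on the remaining factors; the choice of $E$ in \eqref{Edef}, noting the identity $\sig(k)^{-1}k(\lam^t-v)/(1-v) = \sig(k)^{-1}(k-\Delta)$, is exactly what makes $E-\gam$ factors of $P^{1-\sig(k)}$ absorb the residual $P^{k-\Delta}$. Classical major arcs are handled via Corollary \ref{ClassicalMajorGeneral}, where \eqref{cond2} supplies enough variables for the required exponent $u > 2k$; the Davenport–Heilbronn minor arc contribution receives the additional $T(P)^{-1}$ factor from Lemma \ref{Freeman}, and trivial arcs exploit the $|\alp|^{-2}$ decay of $K$. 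Condition \eqref{cond1} ensures $\lam^{t-1}(1-\sig(k)) > v$, so $E$ in \eqref{Edef} is meaningfully positive, and the special base case $k=4$, $t=4$, $E=8$ is handled separately because the iterative scheme is marginal there. The principal obstacle is step three: one must formulate the induction so that the Weyl-type losses at successive stages combine compatibly with the kernel $K$, and verify that the somewhat baroque Definition \ref{CrazyDef} is calibrated precisely to prevent any cumulative damage.
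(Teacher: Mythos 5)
Your outline reproduces the paper's overall strategy: reduce to $s=2t+E$, set $\tau=(E+2.1)P^k$, use the Davenport--Heilbronn arcs of \S\ref{s1bitBasic}, establish an iterative moment estimate for the slowly diminishing ranges (the paper's Lemma~\ref{moment}), and treat the classical minor arcs by pairing a pointwise bound on $g_i$ with that moment estimate. Your identity $\sig(k)^{-1}k(\lam^t-v)/(1-v)=\sig(k)^{-1}(k-\Delta)$ is correct and is indeed what calibrates $E$. Your description of the moment estimate's induction, with the off-diagonal variable $h$ now unbounded ($h\ll P^v$ rather than $h\ll 1$), hence forcing a classical major/minor dissection of the differenced polynomial, also matches Lemma~\ref{moment}.

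However, there is a genuine gap in your treatment of the classical major arcs $\fU\subseteq\fN$. You assert these are ``handled via Corollary~\ref{ClassicalMajorGeneral}, where \eqref{cond2} supplies enough variables for the required exponent $u>2k$.'' That Corollary applies to a single Weyl sum $g_i$ raised to a power $u>2k$, i.e.\ it would require $E+2-\gam>2k$, hence $E>2k-2$. But here $E$ may be as small as $k$ (the definition \eqref{Edef} only guarantees $E\ge k$), so $E+2$ can easily fall below $2k+1$, and Corollary~\ref{ClassicalMajorGeneral} does not apply. This is exactly the point at which the paper departs from the argument of \S\ref{s1bitBasic}: it proves the separate Lemma~\ref{mainHua}, which decomposes $\fU$ according to the size of $q$, applies Lemma~\ref{ClassicalMajorIngredient} both to $g_i$ and to the diminishing-range sums $g_{E+2j-1}$, and uses the savings from the latter to compensate for the too-small exponent on $g_i$. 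The role of \eqref{cond2} is not to push $u$ past $2k$; rather, together with the auxiliary inequality $1-(1-v)/(2\lam)>\lam^k$ (verified directly for $4\le k\le 8$ and asymptotically for $k\ge 9$), it ensures $t-\ell\ge k$ so that the geometric tail $\lam_{\ell+1}+\ldots+\lam_t$ is large enough to absorb the deficit when $E+2\ell\le 2k$. You would need to supply this argument, or something equivalent, before the classical-major-arc estimate is established; the appeal to Corollary~\ref{ClassicalMajorGeneral} as stated would fail for most of the parameter values in Table~\ref{table1}.

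A smaller point: the sums $g_i$ must use the factor $2$ (as in \S\ref{s1bitBasic}), not a general large $c$, for the major-arc lower bound to come out of the construction with $\tau=(E+2.1)P^k$; the large constant $c$ appears only in the $s_0$ argument and in Lemma~\ref{lazy}, not here.
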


Choosing $t$ optimally yields the bounds implicit in Table \ref{table1}. It therefore remains to prove Theorem \ref{mainThm}. Note that 3 is 4-good but 4 is not. Note that \eqref{cond1} and \eqref{cond2} also hold in the case $k=4$. We begin by establishing some even moment estimates. 

\begin{lemma} \label{moment}
Let $k \ge 4$, let $t$ be a positive integer satisfying \eqref{cond1}, and assume that $t-1$ is $k$-good. Recall \eqref{lamDefs}, \eqref{henceforth} and \eqref{EstarDef}. Let $\bmu \in \bR^t$, and let $\eta > 0$. Then the number $T$ of integral solutions to
\begin{equation} \label{momentIneq}
\Bigl| \sum_{j\le t} ((x_j - \mu_j)^k - (y_j-\mu_j)^k) \Bigr| < \eta 
\end{equation}
with $P^{\lam_j} < x_j, y_j \le 2P^{\lam_j}$ $(1 \le j \le t)$ satisfies $T \ll_\eta P^{\Delta+\eps+E^*}$.
\end{lemma}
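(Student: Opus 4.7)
The plan is to realise $T$ as a Fourier integral and dissect the frequency line. With $g_j(\alp) = \sum_{P^{\lam_j} < x \le 2P^{\lam_j}} e(\alp(x-\mu_j)^k)$, the bound \eqref{OrthBounds} gives
\[ T \ll \int_\bR |g_1(\alp) \cdots g_t(\alp)|^2 K(2\alp) \d \alp. \]
I would dissect $\bR$ into a Davenport--Heilbronn major arc $\fM = \{|\alp| \le P^{\xi-k}\}$ for some small $\xi > 0$, plus a family of outer regions. On $\fM$, a smooth approximation in the spirit of Lemma \ref{s1basicMajor} produces the expected main contribution of size $P^\Delta$; everything else must be absorbed into the error factor $P^{\eps + E^*}$.

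On the complement, the strategy is to exploit one Weyl sum at a time. For each level $\ell \in \{1, \ldots, t\}$, I would apply Lemma \ref{ClassicalMajorIngredient} to $g_\ell$ at scale $P^{\lam_\ell}$, producing the dichotomy: either $\alp$ is in a classical minor arc for that level, on which Weyl differencing yields a bound $|g_\ell(\alp)|^2 \ll P^{\lam_\ell(2-v+\eps)}$ (the saving $v=\sigma(k-1)$ arising because the differenced polynomial $(x+h-\mu_\ell)^k-(x-\mu_\ell)^k$ has degree $k-1$ in $x$), or $\alp$ lies in a classical major arc for level $\ell$, on which Corollary \ref{ClassicalMajorGeneral} supplies an $\eps$-free integrated estimate. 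In either case, H\"older's inequality is used to combine the level-$\ell$ bound with a Lemma \ref{lazy} mean value estimate for the remaining factors.

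The hypothesis that $t-1$ is $k$-good enters through these auxiliary mean value bounds: by induction, the $(t-1)$-variable version of \eqref{momentIneq} admits the clean bound $P^{\Delta - \lam_t + \eps}$, so only the final level $t$ can produce a genuine loss. Tracking the exponents through each layer yields the candidate losses: $e_1$ in \eqref{e1def} arises from a H\"older split with exponents $1/2$ and $(2k-1-2\sigma(k))/(2k-2)$ on the layer associated with the longest range, and the $e_\ell$ in \eqref{eldef} for $\ell \ge 2$ come from analogous splits at layers where the range shrinks, with the auxiliary term $M_\ell$ capturing the contribution from the $q$-sum over classical major arcs of size $P^{\lam_\ell k \sigma(k)}$. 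Taking the maximum of these candidates against $0$ gives $E^*$, and assembling the pieces produces the advertised bound $P^{\Delta + \eps + E^*}$.

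The main obstacle is the exponent book-keeping: verifying that each dissection boundary and H\"older exponent yields precisely \eqref{e1def} and \eqref{eldef}. In particular, hypothesis \eqref{cond1}, namely $\lam^{t-1} > v/(1-\sigma(k))$, is needed to ensure that at the finest level $t$ the Weyl-minor-arc threshold actually lies below the classical-major-arc bound, so that the dichotomy is non-vacuous and the layered dissection is coherent. Once these numerical checks are in place, the remainder of the proof is a careful assembly of the standard ingredients already developed in \S\ref{prelim} and \S\ref{basic}.
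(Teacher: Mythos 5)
Your proposal has the right flavour---differencing, classical major/minor dissection, and exponent book-keeping are all part of the argument---but several of the specific moves are misaligned, and one is a genuine gap.

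The central point you miss is how the saving $v = \sigma(k-1)$ actually arises. You propose to apply Lemma~\ref{ClassicalMajorIngredient} directly to each $g_\ell$ and claim a saving of $v$ on classical minor arcs; but applied to a degree-$k$ Weyl sum that lemma only gives a saving of $\sigma(k)$, not $\sigma(k-1)$. The degree reduction in the paper happens \emph{before} the harmonic analysis: the proof is by induction on $t$, separating the diagonal $x_1 = y_1$ (which contributes $\ll PS \ll P^{\Delta+\eps}$ via the inductive hypothesis) from $x_1 > y_1$. Writing $x_1 = x + h$, the mean value theorem combined with the choice $\lambda = 1 - (1-v)/k$ forces $0 < h \le CP^v$, and one then works with the genuinely degree-$(k-1)$ differenced Weyl sum $\Phi_h(\alpha) = \sum_{P<x\le 2P}e(\alpha((x+h-\mu_1)^k-(x-\mu_1)^k))$. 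The entire dissection---into $\fm_h$ and the classical arcs $\fM_h^{(\ell)}$ classified by the size of $q$---is for $\Phi_h$, not for the individual $g_\ell$. There is no Davenport--Heilbronn major arc $\fM = \{|\alpha| \le P^{\xi - k}\}$ in this lemma's proof at all; the main term $P^\Delta$ comes from the diagonal, not from a smooth approximation on a central interval.

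Two further misstatements: (i)~Lemma~\ref{lazy} is keyed to the exponent $\lambda = 1 - 1/k$ and cannot serve as the auxiliary mean value here, where $\lambda = 1 - (1-v)/k$; the auxiliary input is the inductive hypothesis of Lemma~\ref{moment} itself (this is exactly where the hypothesis that $t-1$ is $k$-good is consumed, via $E^* = 0$ at the previous step), applied to the indices $j = 2, \ldots, t$ with $P^\lambda$ in place of $P$, giving $S \ll P^{\Delta - 1 + \eps}$. (ii)~Your claimed exponent $P^{\Delta - \lambda_t + \eps}$ for the $(t-1)$-variable count corresponds to dropping the index $j = t$; the paper drops $j = 1$, and $\Delta - 1$ (not $\Delta - \lambda_t$) is the quantity that threads through the formulas for $e_1$ and $e_\ell$. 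Finally, condition~\eqref{cond1} is used not just to make the dichotomy non-vacuous, but to force agreement $a/q = khu/r$ of the two rational approximations (from $\Phi_h$ and from $f_j$) and hence $q \le r$; this is what licenses \eqref{concs}. Without the differencing step and the correct auxiliary input, the exponent arithmetic would not land on \eqref{e1def} and \eqref{eldef}.
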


\begin{proof} 
We proceed by induction on $t$. By Lemma \ref{SecondMoment}, the conclusion holds for $t=1$. Let $t>1$, and assume that the conclusion of Lemma \ref{moment} holds with $t-1$ in place of $t$, for all large $P$ and all $\eta > 0$. We shall apply this inductive hypothesis with $P^\lam$ in place of $P$, and with $2 \eta$ in place of $\eta$. To do so, we note firstly that $t-2$ is $k$-good because $t-1$ is, and secondly that \eqref{cond1} implies that $\lam^{t-2} > v/(1-\sig(k))$. Since $t-1$ is $k$-good, and since $\lam_j = \lam \lam_{j-1}$ ($2 \le j \le t$), the inductive hypothesis tells us that the number $S$ of integer solutions $x_2,\ldots,x_t, y_2,\ldots,y_t$ to
\begin{equation} \label{Sdef}
\Bigl| \sum_{j=2}^t ((x_j - \mu_j)^k - (y_j-\mu_j)^k) \Bigr|  < 2 \eta 
\end{equation}
with $P^{\lambda_j} < x_j, y_j \le 2 P^{\lambda_j}$ ($2 \le j \le t$) satisfies 
\begin{equation} \label{Sbound}
S \ll (P^\lam)^{\lam_1 + \ldots + \lam_{t-1}+\eps}
\le P^{\lambda_2+ \ldots + \lambda_t+\eps} 
= P^{\Delta-1 + \eps}. 
\end{equation}
Thus the number of solutions counted by $T$ with $x_1 = y_1$ is at most 
\[
(P+1) S \ll P^{\Delta+\eps}.
\]

Since $E^* \ge 0$, it therefore remains to show that $T' \ll  P^{\Delta+\eps+E^*}$, where $T'$ is the number of solutions counted by $T$ with $x_1 > y_1$. Put $y_1 = x$ and $x_1 = x+h$. Let $C$ be a large positive constant, and write $H = CP^v$. The mean value theorem gives
\[
|(x_1-\mu_1)^k - (y_1 - \mu_1)^k| \gg P^{k-1} |x_1 - y_1| = hP^{k-1}.
\]
By combining this with the inequalities \eqref{momentIneq} and
\[
\sum_{j=2}^t ((x_j - \mu_j)^k - (y_j-\mu_j)^k) \ll P^{k \lam_2} = P^{k \lam},
\]
we deduce that
\[ 
0 < h \le CP^{k\lam - k + 1} = CP^v = H. 
\]

Put
\[ f_j(\alp) = \sum_{P^{\lambda_j} < x \le 2P^{\lambda_j}} e(\alp (x- \mu_j)^k)  \qquad (2 \le j \le t) \]
and 
\[ F(\alp) = f_2(\alp) \cdots f_t(\alp).\]
For integers $h$ and real numbers $\alp$, define
\[
\Phi_h(\alp) = \sum_{P < x \le 2P} e(\alp(x+h-\mu_1)^k - \alp(x-\mu_1)^k).
\]
By \eqref{OrthBounds}, we have
\begin{align} \notag
T' &\ll \int_\bR \sum_{h \le H} \Phi_h(\alp) |F(\alp)|^2 K(2\alp) \d \alp \\
\label{firstT} &\le \sum_{h \le H} \int_\bR |\Phi_h(\alp) F(\alp)^2| K(2\alp) \d \alp.
\end{align}

Let $h \in \bN$ and $\alp \in \bR$. The polynomial associated to the Weyl sum $\Phi_h(\alp)$, namely
\[ (x+h-\mu_1)^k - (x-\mu_1)^k, \]
has degree $k-1$ and leading coefficient $kh$. Thus we may apply Lemma \ref{ClassicalMajorIngredient} to the polynomial
\[ \frac{(x+h-\mu_1)^k - (x-\mu_1)^k}{kh},\]
with $d=k-1$, and with $kh\alp$ in place of $\alp$. We thereby deduce that if
\[ |\Phi_h(\alp)| > P^{1-v+\eps} \]
then there exist relatively prime integers $a$ and $q$ such that
\begin{align}
\label{mainqbound} 0 < q &< P^{(k-1)v}, \\
\label{mainbbound} |qkh\alp - a| &< P^{(k-1)(v-1)}
\end{align}
and
\begin{equation} \label{mainPhibound} 
\Phi_h(\alp) \ll q^{\eps - 1/(k-1)}P (1+P^{k-1} kh |\bet|)^{-1/(k-1)},
\end{equation}
where $\bet = \alp - a/(qkh)$. 

For $h, q \in \bN$ and $a \in \bZ$, denote by $\fM_h(q,a)$ the set of $\alp \in \bR$ satisfying \eqref{mainbbound} and \eqref{mainPhibound}. For $h, q \in \bN$, let $\fM_h(q)$ denote the union of the sets $\fM_h(q,a)$ over integers $a$ such that $(a,q) = 1$. For $h \in \bN$, we make the following definitions. Let
\[
\fm_h = \{ \alp \in \bR :  |\Phi_h(\alp)| \le P^{1-v+\eps} \}.
\]
By \eqref{sigmaDef} and \eqref{henceforth}, it is easy to show that $k\sig(k) < (k-1)v$. Denote by $\fM_h^{(1)}$ the union of the sets $\fM_h(q)$ over integers $q$ such that
\begin{equation} \label{qrange1}
P^{\lam_2 k \sig(k)} \le q < P^{(k-1)v}. 
\end{equation}
Write $\lam_{t+1} = 0$. For $\ell =2,3,\ldots,t$, denote by $\fM_h^{(\ell)}$ the union of the sets $\fM_h(q)$ over integers $q$ such that
\begin{equation} \label{qrange2}
P^{\lam_{\ell+1} k \sig(k)} \le q < P^{\lam_\ell k \sig(k)},
\end{equation}
and let $\fU_h^{(\ell)}$ be the intersection of $\fM_h^{(\ell)}$ with a unit interval. From the discussion above, we have
\[
\bR = \fm_h \cup \Bigl( \bigcup_{\ell \le t} \fM_h^{(\ell)} \Bigr). 
\]

Now \eqref{firstT} gives
\[
T' \ll I_\fm + \sum_{h \le H} \sum_{\ell \le t}  I_h^{(\ell)}, 
\]
where
\[
 I_\fm = \sum_{h \le H} \int_{\fm_h} |\Phi_h(\alp) F(\alp)^2| K(2\alp) \d \alp
\]
and
\[
 I_h^{(\ell)} = \int_{\fM_h^{(\ell)}} |\Phi_h(\alp) F(\alp)^2| K(2\alp) \d \alp
\qquad (h \le H, 1 \le \ell \le t).
\]
Moreover, by \eqref{OrthBounds} and \eqref{Sbound}, we have
\[
I_\fm \le \sum_{h \le H} (\sup_{\alp \in \fm_h}|\Phi_h(\alp)|) \cdot S
\le P^{1-v+\eps}HS \ll P^{\Delta+2\eps}.
\]
Since $H \ll P^v$ and $E^* \ge 0$, it now suffices to prove that
\begin{equation} \label{hgoal}
 I_h^{(\ell)} \ll P^{\Delta + E^* - v + \eps}
\qquad (1 \le \ell \le t),
\end{equation}
uniformly in positive integers $h \le H$. Let $h \le H$ be a positive integer.

Now we show that if $q \in \bN$, $q < P^{(k-1)v}$, $\alp \in \fM_h(q)$, $2 \le j \le t$ and
\begin{equation} \label{hyp}
|f_j(\alp)| > P^{\lam_j(1-\sig(k))+\eps}
\end{equation}
then 
\begin{equation} \label{concs}
q  < P^{\lam_j k  \sig(k)}, \qquad f_j(\alp) \ll q^{\eps-1/k}P^{\lam_j}.
\end{equation}
Let $q \in \bN$, $\alp \in \fM_h(q)$ and $2 \le j \le t$, and assume \eqref{mainqbound} and \eqref{hyp}. As $\alp \in \fM_h(q)$, there exists $a \in \bZ$ such that $(a,q)=1$ and $\alp \in \fM_h(q,a)$. By Lemma \ref{ClassicalMajorIngredient}, there exist integers $u$ and $r$ such that
\begin{equation} \label{rbounds}
0 < r < P^{\lam_j k \sig(k)}, \qquad |r\alp - u| < P^{\lam_j k(\sig(k)-1)} 
\end{equation}
and
\begin{equation} \label{r_est}
f_j (\alp) \ll r^{\eps-1/k} P^{\lam_j}.
\end{equation}
By \eqref{mainbbound} and \eqref{rbounds} we have
\begin{align}
\notag |a/q-khu/r| &\le |kh\alp-a/q| + |kh\alp - khu/r| \\
\label{fracdif1} &< \frac1{qP^{(k-1)(1-v)}} + \frac{kh}{rP^{\lam_j k(1-\sig(k))}}.
\end{align}
As $\lam_j \ge \lam^{t-1}$, we note from \eqref{cond1} that $\lam_j > v/(1-\sig(k))$. Since $h \le H = CP^v$, we can now use \eqref{mainqbound} and \eqref{rbounds} to deduce from \eqref{fracdif1} that $|a/q - khu/r| < (qr)^{-1}$. Hence $a/q = khu/r$. Thus, recalling that $(a,q)=1$, we see that $q \le r$. Now \eqref{rbounds} and \eqref{r_est} imply \eqref{concs}.

We draw the following conclusions from the above discussion. Firstly, if $\alp \in \fM_h^{(1)}$ then $\alp \in \fM_h(q)$ for some integer $q$ satisfying $P^{\lam_j k\sig(k)} \le q < P^{1-v}$ ($2 \le j \le t$), so
\begin{equation} \label{Fbound}
|F(\alp)| \le P^{(\Delta-1)(1-\sig(k))+\eps}.
\end{equation}
Secondly, if $2 \le \ell \le t$ and $\alp \in \fM_h(q)$ for some integer $q$ satisfying \eqref{qrange2} then
\begin{equation} \label{ded1}
|f_j(\alp)| \le P^{\lam_j(1-\sig(k))+\eps} \qquad (\ell < j \le t)
\end{equation}
and
\begin{equation} \label{ded2}
f_j(\alp) \ll P^{\lam_j(1-\sig(k))+\eps} + q^{\eps-1/k}P^{\lam_j} \ll q^{\eps' - 1/k}P^{\lam_j} \qquad (2 \le j \le \ell),
\end{equation}
where $\eps' = (\lam_j k\sig(k))^{-1}\eps$. 

H\"older's inequality gives
\begin{equation} \label{Ih1decomp}
I_h^{(1)} \le J_{1,h}^{1/(2k-2)} J_{2,h}^{(2k-3)/(2k-2)},
\end{equation}
where
\[
J_{1,h} = \int_{\fM_h^{(1)}} |\Phi_h(\alp)|^{2k-2} K(2\alp) \d \alp
\]
and
\[
J_{2,h} = \int_{\fM_h^{(1)}} |F(\alp)|^{(4k-4)/(2k-3)} K(2\alp) \d \alp.
\]
If $q \in \bN$ then there are at most $qkh+1$ integers $a$ satisfying \eqref{mainbbound} for some $\alp \in \fU_h^{(1)}$. Thus, by \eqref{mainPhibound} and \eqref{qrange1}, we have
\[
\int_{\fU_h^{(1)}} |\Phi_h(\alp)|^{2k-2} \d \alp 
\ll \sum_{q < P^{(k-1)v}} qh \cdot q^{\eps-2} P^{2k-2}  \fJ_h,
\]
where 
\[ 
\fJ_h = \int_0^\infty (1+P^{k-1} kh \bet)^{-2} \d \bet
\ll h^{-1} P^{1-k}.
\]
Hence
\[
\int_{\fU_h^{(1)}} |\Phi_h(\alp)|^{2k-2} \d \alp 
\ll P^{k-1+\eps}
\]
which, by \eqref{Kbound}, yields 
\begin{equation} \label{J1h}
J_{1,h} \ll P^{k-1+\eps}.
\end{equation}
Moreover, on recalling the definition of $S$ from \eqref{Sdef}, the inequalities \eqref{OrthBounds}, \eqref{Sbound} and \eqref{Fbound} give
\begin{align} \notag
J_{2,h} &\ll \sup_{\alp \in \fM_h^{(1)}} |F(\alp)| ^{2/(2k-3)} S
\ll P^{(\Delta-1)(1-\sig(k))2/(2k-3)+\eps}S 
\\  
\label{J2h}
& \ll P^{(\Delta-1)(2k-1-2\sig(k))/(2k-3)+2\eps}.
\end{align}
Substituting \eqref{J1h} and \eqref{J2h} into \eqref{Ih1decomp} yields
\begin{equation} \label{Ih1}
I_h^{(1)} \ll P^{1/2 + (\Delta-1)(2k-1-2\sig(k))/(2k-2)+\eps} = P^{\Delta+e_1 - v +\eps},
\end{equation}
where we recall \eqref{e1def}.

Let $\ell \in \{2,3,\ldots,t\}$. If $q \in \bN$ then there are at most $qkh+1$ integers $a$ satisfying \eqref{mainbbound} for some $\alp \in \fU_h^{(\ell)}$. Now \eqref{mainbbound}, \eqref{mainPhibound}, \eqref{qrange2}, \eqref{ded1} and \eqref{ded2} yield
\[
I_h^{(\ell)} \ll  P^{1+2(\Delta-1)-2\sig(k)\cdot (\lam_{\ell+1} + \ldots + \lam_t)+\eps}   X_h^{(\ell)},
\]
where
\[ 
X_h^{(\ell)} = \sum_{q < P^{\lam_\ell k\sig(k)}}  qh \cdot q^{2(1-\ell)/k-1/(k-1)} \cJ_{h,q},
\]
where
\[
\cJ_{h,q} = \int_0^{(qkh)^{-1}P^{(k-1)(v-1)}} (1+P^{k-1}kh\bet)^{-1/(k-1)} \d \bet.
\]
The calculation
\begin{align*}
\cJ_{h,q} &\ll P^{-1}h^{-1/(k-1)} \cdot ((qkh)^{-1}P^{(k-1)(v-1)})^{(k-2)/(k-1)} \\
&\ll h^{-1}q^{-(k-2)/(k-1)}P^{-1+(k-2)(v-1)}
\end{align*}
now gives
\begin{align} 
\notag I_h^{(\ell)} &\ll P^{2\Delta-2-2\sig(k)\cdot (\lam_{\ell+1} + \ldots + \lam_t)+(k-2)(v-1)+\eps}  
\sum_{q < P^{\lam_\ell k\sig(k)}} q^{2(1- \ell)/k}\\
\label{Ihl} &\ll P^{\Delta+e_\ell-v + 2\eps} \qquad (2 \le \ell \le t),
\end{align}
where we recall \eqref{eldef}.

In light of \eqref{EstarDef}, the bounds \eqref{Ih1} and \eqref{Ihl} yield \eqref{hgoal}, completing the proof of Lemma \ref{moment}.
\end{proof}

We are ready to prove Theorem \ref{mainThm}. Let $s \ge 2t+E$, and let $\eta > 0$. Let $\tau$ be a large positive real number, and define $P$ by $\tau = (E + 2.1)P^k$. We need to show that there exist integers $x_1 > \mu_1, \ldots, x_s > \mu_s$ satisfying \eqref{main}. By fixing the variables $x_{2t+E+1}, \ldots, x_s$ if necessary, we may plainly assume \eqref{wlog} and \eqref{sdef}. Recall \eqref{lamDefs} and \eqref{henceforth}. Let $0 < \xi < \lam_t$, and let $\gam$ be a small positive real number. Define $g_1, \ldots, g_s$, $T(P)$ and our Davenport-Heilbronn arcs as in \S \ref{s1bitBasic}, but note that $\lam$ is different here. By \eqref{OrthBounds}, it suffices to prove that
\[ \int_\bR g_1(\alp) \cdots g_s(\alp) e(-\alp \tau) K(\alp) \gg P^{E+2\Delta-k}. \]

Noting that $E \ge k > k-2$, the proof of Lemma \ref{s1basicMajor} gives
\[ 
\int_\fM g_1(\alp) \cdots g_s(\alp) e(-\alp \tau) K(\alp) \d \alp \gg P^{E+2\Delta-k}. 
\]
By H\"older's inequality and symmetry, it now remains to show that 
\[
\int_{\fm \cup \ft} |g_1(\alp) g_2(\alp) g_3(\alp)^E \prod_{2 \le j \le t} g_{E+2j-1}(\alp)^2| K(\alp) \d \alp
= o(P^{E + 2 \Delta - k}).
\]
By inspecting \eqref{jdef}, we see that 
\[
g_{E+2j-1}(\alp) = \sum_{P^{\lam_j} < x \le 2P^{\lam_j}}e(\alp(x-\mu_{E+2j-1})^k).
\]
Fix $i \in \{1,2,3\}$, let
\[ \fN = \{ \alp \in \bR: |g_i(\alp)| > P^{1-\sigma(k)+\eps} \}, \]
put $\fn = \bR \setminus \fN$, and let $\fU$ be the intersection of $\fN$ with a unit interval.  For subsets $U \subseteq \bR$, write
\[ I(U) = \int_U |g_i(\alp)^{E+2-\gam} \prod_{2 \le j \le t} g_{E+2j-1}(\alp)^2| K(\alp) \d \alp.\]

If $k = 4$ then \eqref{OrthBounds} and Lemma \ref{moment} yield
\[ \int_\bR |g_i(\alp)^2 \prod_{2 \le j \le t} g_{E+2j-1}(\alp)^2| K(\alp) \d \alp \ll P^{3.011}, \]
since 3 is 4-good and $(13/16)^3 > 2/7$. Now
\[
I(\fn) \ll (\sup_{\alp \in \fn} |g_i(\alp)|)^{8-\gam} \cdot P^{3.011}
\le  P^{3.011+ (8-\gam)7/8 + \eps}.
\]
Thus, as $\gam$ is small, we have
\[
I(\fn) \ll P^{10.011 - 7\gam / 8 +\eps} = o(P^{E+2\Delta - k - \gam})
\]
when $k=4$. The exponents have been computed by machine. 

If $k \ge 5$ then $t$ is $k$-good, so \eqref{OrthBounds} and Lemma \ref{moment} yield
\[ \int_\bR |g_i(\alp)^2 \prod_{2 \le j \le t} g_{E+2j-1}(\alp)^2| K(\alp) \d \alp \ll P^{\Delta+\eps}. \]
Now
\[
I(\fn) \ll (\sup_{\alp \in \fn} |g_i(\alp)|)^{E-\gam} \cdot P^{\Del+\eps} 
\le  P^{\Delta + (E-\gam)(1-\sig(k)) + 2\eps}. 
\]
Recalling \eqref{lamDefs}, \eqref{henceforth} and \eqref{Edef}, we deduce that
\[ 
E\sig(k) > k(\lam^t-v)/(1-v) = k - \Delta. 
\]
As $\gam$ and $\eps$ are small, we must therefore have
\[
(E-\gam)\sig(k) - 2\eps > k - \Delta.
\]
Hence
\begin{equation*} 
I(\fn) = o(P^{E+2\Delta- k - \gam}) 
\end{equation*}
for $k \ge 5$, and we have already shown this for $k = 4$.

\begin{lemma} \label{mainHua}
Let $i \in \{1,2,3\}$ be as above. Then
\[
\int_\fU |g_i(\alp)^{E+2-\gam} \prod_{2 \le j \le t} g_{E+2j-1}(\alp)^2| \d \alp \ll P^{E+2\Delta-k-\gam}.
\]
\end{lemma}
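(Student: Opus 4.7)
My strategy is to exploit the same $q$-partition that underlies the proof of Lemma \ref{moment}. By an inconsequential linear change of variables we may take the polynomial defining $g_i$ to be monic, so that Lemma \ref{ClassicalMajorIngredient} with $d = k$ applies: for every $\alp \in \fU$ there exist coprime integers $a, q$ with $0 < q < P^{k\sig(k)}$, $|q\alp - a| < P^{k\sig(k)-k}$, and
\[
|g_i(\alp)| \ll q^{\eps - 1/k} P(1 + P^k|\bet|)^{-1/k}, \qquad \bet = \alp - a/q.
\]
Adopting the convention $\lam_{t+1} = 0$, I partition $\fU$ into subsets $\fU^{(\ell)}$ for $\ell = 1, \ldots, t$ corresponding to denominators in the range $P^{\lam_{\ell+1}k\sig(k)} \le q < P^{\lam_\ell k\sig(k)}$, and treat each piece separately.

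On $\fU^{(\ell)}$ I control each short Weyl sum $g_{E+2j-1}(\alp)$ (a sum of length $P^{\lam_j}$ with $2 \le j \le t$) by the approximation-matching argument used inside the proof of Lemma \ref{moment}, which pairs Lemma \ref{ClassicalMajorIngredient} applied to $g_{E+2j-1}$ with the uniqueness of the underlying rational approximation. For $2 \le j \le \ell$ the denominator $q$ lies below $P^{\lam_j k \sig(k)}$, which produces the classical major-arc bound $g_{E+2j-1}(\alp) \ll q^{\eps - 1/k}P^{\lam_j}$; for $\ell < j \le t$ the denominator $q$ exceeds $P^{\lam_j k \sig(k)}$, which forces the Weyl-type bound $|g_{E+2j-1}(\alp)| \le P^{\lam_j(1-\sig(k))+\eps}$. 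Substituting these estimates together with the pointwise bound from Lemma \ref{ClassicalMajorIngredient} on $|g_i(\alp)|^{E+2-\gam}$, integrating $\bet$ over its range of length $O(q^{-1}P^{k\sig(k)-k})$ to harvest a factor $O(P^{-k})$ (admissible because $E \ge k$ ensures $(E+2-\gam)/k > 1$), and finally summing over the $O(q)$ admissible values of $a$ per $q$ followed by $q$ in the prescribed window gives the desired contribution of each $\fU^{(\ell)}$.

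The main obstacle is verifying that every piece $\fU^{(\ell)}$ contributes at most $P^{E+2\Del-k-\gam}$. This is a delicate exponent calculation that closely parallels \eqref{e1def}--\eqref{eldef}, and the definition of $E$ in \eqref{Edef} together with the $k$-goodness hypothesis on $t$ are tailored precisely so that the worst $\ell$—typically $\ell = 1$, where the Weyl bound is invoked for every short sum—yields exactly the target exponent, with any $\eps$-slack absorbed by the (positive) constant $\gam$. Summing the $t$ pieces (with the factor $t$ absorbed into the error) then completes the argument.
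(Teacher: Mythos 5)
Your decomposition of $\fU$ into pieces $\fU^{(\ell)}$ by denominator size, the application of Lemma \ref{ClassicalMajorIngredient} to $g_i$, the approximation-matching argument giving a classical major-arc bound on $g_{E+2j-1}$ when $q < P^{\lam_j k\sig(k)}$ and a Weyl-type bound otherwise, the integration over $\bet$ harvesting $P^{-k}$, and the counting of $O(q)$ values of $a$ per $q$ all match the paper's proof. The mechanics up to and including \eqref{Il0} are essentially correct.

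However, you leave the crux of the proof — verifying that each $I_\ell$ is $\ll P^{E+2\Del-k-\gam}$ — as an unproved "delicate exponent calculation," and your description of what drives it is incorrect. First, the $k$-goodness hypothesis plays no role in Lemma \ref{mainHua}: it enters only in Lemma \ref{moment}, where it controls the quantities $e_1,\ldots,e_t$ in \eqref{e1def}--\eqref{eldef}, which concern the degree-$(k-1)$ Weyl sums $\Phi_h$. The exponent verification here is a different computation. Second, the paper's argument does not isolate $\ell=1$ as the worst case; instead it splits according to whether $E+2\ell>2k$ (in which case the $q$-sum in \eqref{Il0} is $O(1)$ and the negative prefactor $P^{-2\sig(k)(\lam_{\ell+1}+\ldots+\lam_t)}$ immediately suffices) or $\ell\le k-E/2$. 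In the latter range one must verify the inequality \eqref{laststop}, namely
\[
\lam_\ell k\bigl(2-(E+2\ell)/k\bigr) < 2(\lam_{\ell+1}+\ldots+\lam_t),
\]
and this requires the auxiliary inequality \eqref{inter}, $1-(1-v)/(2\lam)>\lam^k$, together with the hypothesis \eqref{cond2}, $2t+E\ge 4k$, which forces $t-\ell\ge k$. These are the ingredients that actually close the argument, and none of them appear in your sketch; nor is it obvious that \eqref{inter} holds without the separate verification the paper provides (direct computation for $4\le k\le 8$, an elementary estimate using $\lam>(k-1)/k$ and $e^{v-1}$ for $k\ge 9$). Without this step the proposal does not establish the lemma.
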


\begin{proof} Let
\[ f_1(\alp ) = g_i(\alp) \]
and
\[ 
f_j(\alp) = g_{E+2j-1}(\alp) \qquad (2 \le j \le t). 
\]
By Lemma \ref{ClassicalMajorIngredient}, if $\alp \in \fU$ then there exist relatively prime integers $a$ and $q$ such that
\begin{equation} \label{bmain} 0 < q < P^{k\sig(k)}, \qquad |q \alp - a| < P^{k\sig(k)-k} \end{equation}
and
\begin{equation} 
\label{gmain} f_1(\alp) \ll q^{\eps-1/k} P (1+P^k|\beta|)^{-1/k}, 
\end{equation}
where $\bet = \alp -a/q$. For $q \in \bN$ and $a \in \bZ$, denote by $\fU(q,a)$ the set of $\alp \in \fU$ satisfying \eqref{bmain} and \eqref{gmain}. For $q \in \bN$, let $\fU(q)$ denote the union of the sets $\fU(q,a)$ over integers $a$ such that $(a,q) = 1$. Write $\lam_{t+1} = 0$. For $\ell =1,2,\ldots,t$, denote by $\fU^{(\ell)}$ the union of the sets $\fU(q)$ over integers $q$ satisfying \eqref{qrange2}.

Note that
\[
\fU = \bigcup_{\ell \le t} \fU^{(\ell)},
\]
so 
\begin{equation} \label{maindecomp}
\int_\fU |f_1(\alp)^{E+2-\gam} \prod_{2 \le j \le t} f_j(\alp)^2| \d \alp 
\le \sum_{\ell \le t} I_\ell,
\end{equation}
where 
\[ I_\ell = \int_{\fU^{(\ell)}} |f_1(\alp)^{E+2-\gam} \prod_{2 \le j \le t} f_j(\alp)^2| \d \alp
\qquad (1 \le \ell \le t). \]
We now fix $\ell \in \{1,2,\ldots,t\}$. By \eqref{maindecomp}, it remains to show that
\begin{equation} \label{almostdone}
I_\ell \ll P^{E+2\Delta-k-\gam}.
\end{equation}

Let $\alp \in \fU(q,a)$ for some relatively prime integers $q$ and $a$ satisfying $\eqref{qrange2}$, let $2 \le j \le t$, and assume \eqref{hyp}. By Lemma \ref{ClassicalMajorIngredient}, there exist integers $u$ and $r$ satisfying \eqref{rbounds} and \eqref{r_est}. By \eqref{rbounds} and \eqref{bmain} we have
\begin{align}
\notag |a/q - u/r| &\le |\alp-a/q| + |\alp - u/r| \\
\label{fracdif2} &< \frac1{qP^{k-k\sig(k)}} + \frac1{rP^{\lam_j k(1-\sig(k))}}.
\end{align}
As $\lam_j \ge \lam^{t-1}$, we note from \eqref{cond1} that $\lam_j > v/(1-\sig(k))$. We can now use \eqref{qrange2} and \eqref{rbounds}, together with the inequality $v > \sig(k)$, to deduce from \eqref{fracdif2} that $|a/q - u/r| < (qr)^{-1}$. Hence $a/q = u/r$. Thus, recalling that $(a,q)=1$, we see that $q \le r$. Now \eqref{rbounds} and \eqref{r_est} imply \eqref{concs}. 

Let $\alp \in \fU(q)$ for some $q \in \bZ$ satisfying \eqref{qrange2}. From the above discussion, if $2 \le j \le t$ then \eqref{concs} holds whenever \eqref{hyp} holds. Thus, we deduce \eqref{ded1} and \eqref{ded2}. If $q \in \bN$ then there are at most $q+1$ integers $a$ satisfying \eqref{bmain} for some $\alp \in \fU$. By \eqref{qrange2}, applying \eqref{ded1}, \eqref{ded2} and \eqref{gmain} now yields
\[
I_\ell 
\ll P^{E-\gam + 2\Delta + (\eps - 2\sig(k)) (\lam_{\ell+1} + \ldots + \lam_t)}
\sum_{q < P^{\lam_\ell k \sig(k)}} q^{1-(E-\gam+2 \ell)/k+\eps} \bJ,
\]
where
\[
\bJ = \int_0^\infty (1+P^k\bet)^{-(E+2-\gam)/k} \d \bet.
\]
Since $E \ge k$ and $\gamma$ is small, we must have $\bJ \ll P^{-k}$, so
\begin{equation} \label{Il0}
I_\ell \ll P^{E+ 2\Delta - k -\gam  + (\eps - 2\sig(k)) (\lam_{\ell+1} + \ldots + \lam_t)} \sum_{q < P^{\lam_\ell k \sig(k)}} q^{1-(E-\gam+2\ell)/k+\eps}.
\end{equation}

If $E + 2 \ell > 2k$ then the sum is $O(1)$ and so \eqref{almostdone} holds. We may therefore assume in the sequel that 
\begin{equation} \label{lcond}
\ell \le k-E/2.
\end{equation}
From \eqref{Il0} we now have
\[ I_\ell \ll P^{E+ 2\Delta - k -\gam  + \eps - 2\sig(k) \cdot (\lam_{\ell +1} + \ldots + \lam_t)
+ \lam_\ell k\sig(k) \cdot (2-(E+2 \ell-\gam)/k)}. \]
As $\gam$ and $\eps$ are small, it therefore suffices to show that
\begin{equation} \label{laststop}
\lam_\ell  k (2-(E+2\ell )/k) < 2(\lam_{\ell +1}+\ldots + \lam_t).
\end{equation}

As an intermediate step, we show that
\begin{equation} \label{inter}
1- \frac{1-v}{2\lam} > \lam^k.
\end{equation}
For $4 \le k \le 8$, we simply check this directly. Now suppose $k \ge 9$. As $\lam > (k-1)/k$, we have
\begin{align*}
1- \frac{1-v}{2\lam} &> 1-\frac{(1-v)k}{2(k-1)} > 1-\frac{k}{2(k-1)} \ge 1-9/16 > e^{v-1} \\
& \ge (1-(1-v)/k)^k = \lam^k.
\end{align*}
Thus we have \eqref{inter} for all integers $k \ge 4$. 

By \eqref{cond2} and \eqref{lcond} we have 
\[ t-\ell \ge t - (k-E/2) = (2t+E)/2 - k \ge k. \]
This and \eqref{inter} give
\[ \lam^{t-\ell} \le \lam^k < 1- \frac{1-v}{2\lam}. \]
Now
\[ \frac k2 <  \frac {k\lam}{1-v} (1-\lam^{t-\ell}) = \frac \lam {1-\lam} (1- \lam^{t- \ell}), \]
as we recall \eqref{henceforth}. Hence
\[
\lam_\ell k /2  < \lam_\ell (\lam + \lam^2 \ldots + \lam^{t- \ell}) = \lam_{\ell +1} + \ldots + \lam_t.
\]
Since $E \ge k$, this gives \eqref{laststop}, completing the proof of Lemma \ref{mainHua}.
\end{proof}

The remainder of the proof is identical to that of \eqref{basicBounds} for $\iota=1$, which is given in \S \ref{s1bitBasic}. Thus we have established Theorem \ref{mainThm} which, as discussed, produces the data in Table \ref{table1}.

\bibliographystyle{amsrefs}
\providecommand{\bysame}{\leavevmode\hbox to3em{\hrulefill}\thinspace}

\end{document}